\theoremstyle{plain}
\newtheorem{thm}{Theorem}
\newtheorem{prop}[thm]{Proposition}
\newtheorem{lemma}[thm]{Lemma}
\newtheorem{cor}[thm]{Corollary}
\theoremstyle{remark}
\newtheorem{rem}[thm]{Remark}
\newtheorem{exam}[thm]{Example}
\newcommand{\bcdot}{\boldsymbol{\cdot}}
\def\var{{\rm {\mathbb Var\,}}}
\def\pvar{{\rm {\mathbb Var}_{\Pi_n}}}
\newcommand{\eps}{\varepsilon}
\newcommand{\half}{\frac{1}{2}}
\newcommand{\pp}{\mathbb{P}}
\newcommand{\ee}{\mathbb{E}}
\newcommand{\cf}{\mathcal{F}}
\newcommand{\dd}{\mathrm{d}}
\newcommand{\Pp}{\mathbb{P}}
\newcommand{\fpspace}{(\Omega,\mathcal{F},\mathbb{F},\Pp)}
\newcommand{\ig}{\operatorname{IG}}
\newcommand{\pot}{\mathrm{POT}}
\newcommand{\one}{\mathbf{1}}
\definecolor{lime}{HTML}{A6CE39}
\DeclareRobustCommand{\orcidicon}{%
	\begin{tikzpicture}
	\draw[lime, fill=lime] (0,0) 
	circle [radius=0.16] 
	node[white] {{\fontfamily{qag}\selectfont \tiny ID}};
	\draw[white, fill=white] (-0.0625,0.095) 
	circle [radius=0.007];
	\end{tikzpicture}
	\hspace{-2mm}
}
\xdef\csname orcid\x\endcsname{\noexpand\href{https://orcid.org/\csname orcidauthor\x\endcsname}{\noexpand\orcidicon}}
\begin{document}

\begin{frontmatter}
\title{Nonparametric Bayesian volatility estimation for gamma-driven stochastic differential equations}
\runtitle{Bayesian volatility estimation}
\runauthor{Belomestny, Gugushvili, Schauer, Spreij}
\begin{aug}
\author[A]{\fnms{Denis} \snm{Belomestny}\ead[label=e1]{denis.belomestny@uni-due.de}},
\author[B]{\fnms{Shota} \snm{Gugushvili}\ead[label=e2]{shota@yesdatasolutions.com}}
\and
\author[C]{\fnms{Moritz} \snm{Schauer}\ead[label=e3]{smoritz@chalmers.se}}
\and
\author[D]{\fnms{Peter} \snm{Spreij}\ead[label=e4]{spreij@uva.nl}}
\address[A]{Department,
Faculty of Mathematics\\
Duisburg-Essen University\\
Thea-Leymann-Str.~9\\
D-45127 Essen\\
Germany\\
and Faculty of Computer Sciences\\
HSE University\\
Moscow, Russian Federation
\printead{e1}}

\address[B]{Biometris\\
	Wageningen University \& Research\\
	Postbus 16\\
	6700 AA Wageningen\\
	The Netherlands
\printead{e2}}

\address[C]{Department of Mathematical Sciences\\
	Chalmers University of Technology and University of Gothenburg\\
	SE-412 96 G\"{o}teborg\\
	Sweden
\printead{e3}}

\address[D]{Korteweg-de Vries Institute for Mathematics\\
Universiteit van Amsterdam\\
P.O.\ Box 94248\\
1090 GE Amsterdam\\
The Netherlands\\
and Institute for Mathematics, Astrophysics and Particle Physics\\
Radboud University\\
Nijmegen\\
Netherlands
\printead{e4}}

\end{aug}

\begin{abstract}
We study a nonparametric Bayesian approach to estimation of the volatility function of a stochastic differential equation driven by a gamma process. The volatility function is modelled a priori as piecewise constant, and we specify a gamma prior on its values. This leads to a straightforward procedure for posterior inference via an MCMC procedure.  We give theoretical performance guarantees (minimax optimal contraction rates for the posterior) for the Bayesian estimate in terms of the regularity of the unknown volatility function. We  illustrate the method on synthetic  and real data examples.
\end{abstract}

\begin{keyword}
\kwd{Gamma process}
\kwd{Nonparametric Bayesian estimation}
\kwd{Stochastic differential equation}
\end{keyword}

\end{frontmatter}

\section{Introduction}
\subsection{Problem formulation}

The goal of the present paper is Bayesian nonparametric estimation of the positive local scale function $\sigma$ that appears in the L\'evy-driven stochastic differential equation 
\begin{equation} \label{eq:sde}
\dd X_t=\sigma(X_{t-})\,\dd L_t,\, X_0= 0, 
\end{equation}
from observations of the solution $X$. Here
$L$ is a gamma process with $L_0=0$ and therefore $L$ is a subordinator, i.e.\ a stochastic process with monotonous sample paths. Furthermore, $L$ has
a L\'evy measure $\nu$ admitting the L\'evy density
\begin{equation}\label{eq:v}
v(x)= \alpha x^{-1}\exp(-\beta x),\, x>0,
\end{equation}
where $\alpha$ and $\beta$ are two positive constants. The process $L$ has independent increments,  and $L_t-L_s$ has a Gamma$(\alpha (t-s),\beta)$ distribution for $t>s$ with shape parameter $\alpha$ and scale parameter $1/\beta$, as defined in Section~\ref{subsec:notation}.

Under the assumption that the function $\sigma$ (in view of financial applications we refer to it as volatility function) is measurable and satisfies a linear growth condition, it has been shown in \cite{bgssweak} that Equation~\eqref{eq:sde} admits a weak solution that is unique in law. Under the stronger condition that $\sigma$ is 
Lipschitz continuous, Equation~\eqref{eq:sde} even has a unique strong solution, see~\cite[Theorem~V.6]{Protter2004}. Note that $X$ is a Markov process.

\begin{exam}\label{exam:1}
To get an impression of how observations of $X$ look like, we take $X$ to solve the L\'evy SDE \eqref{eq:sde} with the volatility function
\begin{equation}
\label{vol_ex1}
\sigma_0(x) = \frac{1}{500}\left(\frac{3}{2}+\sin(2\pi x)\right).
\end{equation}
For the driving gamma process $L$ we take parameters $\alpha =1$, $\beta = 1$.
We used the Euler scheme to generate a single trajectory to serve as observation. 
We simulated a path traversing the unit interval on a time grid with time step-size $\delta t = 0.0001$ starting in $X_0 = 0 = b_0$. In Section~\ref{subsec:prior} we will introduce bins with boundaries $b_k$ and their hitting times $\tau_k$.
\emph{Approximate} hitting times were obtained setting $\tau'_k = \inf\{i \delta t   \colon X_{i \delta t}  \ge b_k\}$.  Figure~\ref{fig:ex1obs} shows the sample path until the hitting time $\tau'_K$ of $b_K = 1$.

\begin{figure}
\begin{center}
\includegraphics[width=0.6\linewidth]{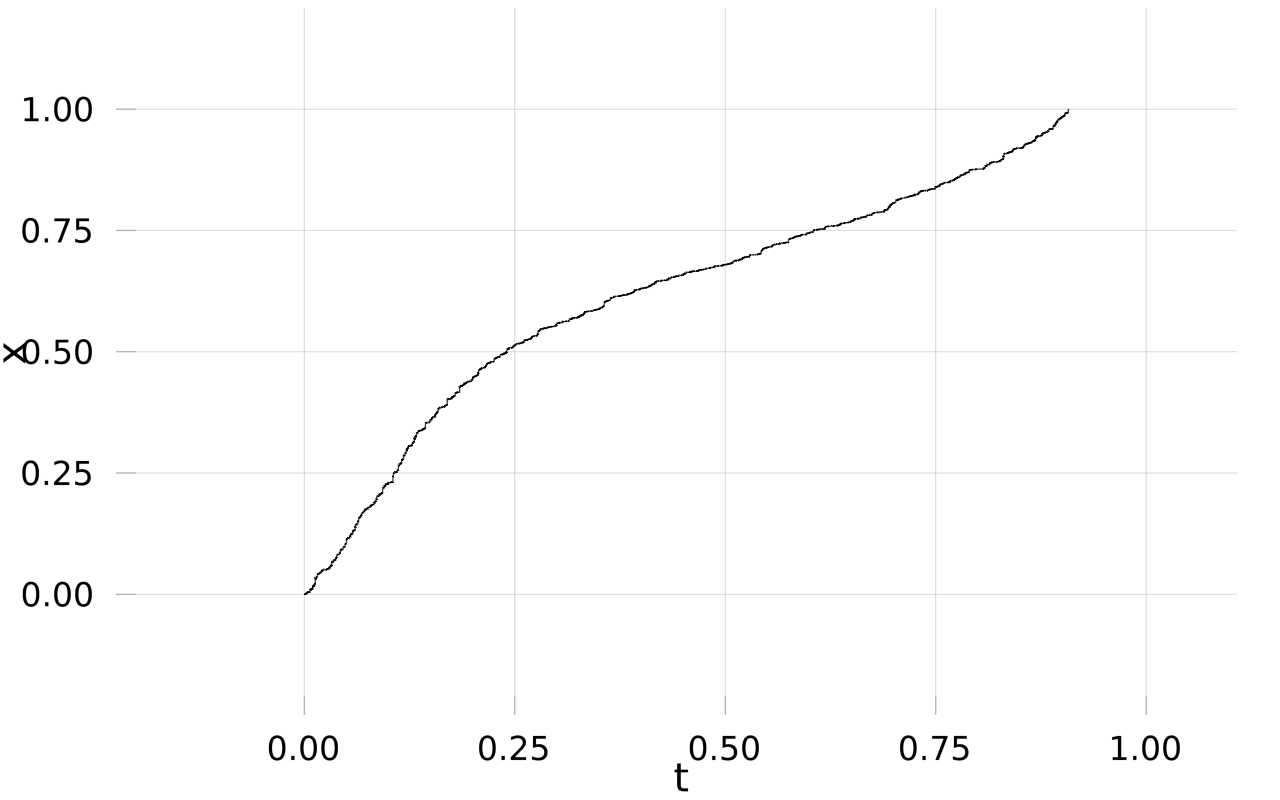}
\end{center}
\caption{Observation $X^n$, $n = 200$, generated from \eqref{eq:sde} with the Euler scheme with volatility $\sigma_0$ given in \eqref{vol_ex1}. Parameters of the driving gamma process are $\alpha =1$, $\beta = 1$.}
\label{fig:ex1obs}
\end{figure}

\end{exam}
\subsection{Motivation}
Gamma processes, that form a special class of L\'evy processes (see, e.g., \cite{kyprianou14}), are a fundamental modelling tool in several fields, e.g. reliability (see \cite{noortwijk09}) and risk theory (see \cite{dufresne91}). Since the driving gamma process $L$ in \eqref{eq:sde} has non-decreasing sample paths and the volatility function $\sigma$ is non-negative, also the process $X$ has non-decreasing sample paths. Such processes find applications across various fields. A reliability model as in \eqref{eq:sde} has been thoroughly investigated from a probabilistic point of view in \cite{wenocour89}, and constitutes a far-reaching generalisation of a basic gamma model. Furthermore, non-decreasing processes are ideally suited to model revenues from an innovation: in \cite{chance08}, the authors study the question of pricing options on movie box office revenues that are modelled through a gamma-like stochastic process. Another potential application is in modelling the evolution of forest fire sizes over time, as in \cite{reed02}.

Any practical application of the model \eqref{eq:sde} would require knowledge of the volatility function $\sigma$, that has to be inferred from observations on the process $X$. In this paper, we will approach estimation of $\sigma$ nonparametrically. The latter comes in handy when no apparent functional form for the volatility is available, which usually is the case in practice. Such an approach reduces the risk of model misspecification and allows a honest representation of uncertainties in inferential conclusions (see \cite{silverman86} and \cite{mueller13}). Recent years have witnessed a tremendous growth of interest and rapid advances in nonparametric Bayesian methods (see, e.g., two monographs \cite{ghosal17} and \cite{mueller15}). Coherence, elegance and automatic uncertainty quantification are some of the widely acknowledged attractive features of a Bayesian approach to statistics. Hence our decision to follow a Bayesian method in this paper. However, we also note that with a careful choice of a prior a nonparametric Bayesian method enjoys very favourable frequentist properties; see, e.g., \cite{ghosal17}.

On the theoretical side,
we are ultimately interested in asymptotic properties of our Bayesian procedure for estimating the volatility function $\sigma$. To that end we need a sufficiently rich set of observations and this can be accomplished by the scaled observation process in \eqref{eq:sden} below, instead of $X$ satisfying \eqref{eq:sde}. So we consider the process $X^n$ given as the solution to
\begin{equation}\label{eq:sden}
\dd X^n_t=\frac{1}{n}\sigma(X^n_{t-})\,\dd L_t,\, X^n_0=0.  
\end{equation}
The scaling factor $\frac{1}{n}$ causes for large values of $n$ a `slow growth' of the process $X^n$ and `long times' to reach certain levels.
We will thus assume that $X^n$ is observed on a long time interval $[0,T^n]$, where $T^n\to\infty$. Later we will specify $T^n$ and we will see that $T^n$ grows roughly proportionally with $n$. Asymptotic results will be derived for $n\to\infty$. The setup above for having a rich set of observations allows for a different, but equivalent description of the model. Let $Y^n_t=X^n_{nt}$. Then $Y^n$ satisfies the SDE
\begin{equation}\label{eq:sdey}
Y^n_{t}=\int_0^{t}\sigma(Y^n_{s-})\,\dd L^n_{s},
\end{equation}
where $L^n_t=\frac{L_{nt}}{n}$. Note that $L^n$ is again a gamma process with a L\'evy density  
\[
v_n(x)=\frac{n\alpha}{x}\exp(-n\beta x),
\]
and (for $t>s\geq 0$) $L^n_t-L^n_s$ has a Gamma$(n\alpha (t-s), n\beta)$ distribution. Here we see  a smoother behaviour of $L^n$ and hence of $Y^n$ for growing $n$. In fact, it can be shown that $L^n$ weakly converges to the function $L^\infty$ given by $L^\infty_t=\frac{\alpha}{\beta}t$, and as a consequence $Y^n$ should then converge to a deterministic limit as well. This behaviour of the process $Y^n$ bears some similarity to the properties of a diffusion process $Y^\eps$  with small diffusion coefficient ($W$ is a Wiener process), 
\[
Y^\eps_t=\int_0^t a(Y^\eps_s)\,\dd s + \eps W_t,
\]
which also has a deterministic limit as $\eps\to 0$. The similarity becomes more pronounced, if one writes the semimartingale decomposition of $Y^n$,
\[
Y^n_{t}=\frac{\alpha}{\beta}\int_0^{t}\sigma(Y^n_{s-})\,\dd s+M^n_t,
\]
where, under appropriate conditions, the (local) martingale $M^n$ vanishes for $n\to\infty$.
This conceptual similarity with drift estimation in a well-known diffusion model and the statistical problem there, estimation of the function $a$, should be understood as a mathematical motivation for our study.
Although it is possible to present all results that follow in terms of properties of the process $Y^n$, we opted to give them for $X^n$.

\subsection{Literature overview}
Statistical inference for L\'evy driven SDEs is an active research with many contributions. 
Let us mention recent works by \cite{jasra2011inference}, \cite{jasra2019bayesian},\cite{uehara2019statistical}, \cite{gushchin2019drift} and \cite{eguchi2020schwartz}. Such models are popular in finance and econometrics, see e.g. \cite{todorov2011econometric}. In this paper we study the problem of Bayesian inference for the volatility coefficient of a L\'evy-driven SDE. 

Nonparametric Bayesian literature on inference in the model \eqref{eq:sde} is non-existent, but somewhat related problems have been considered in several papers. Thus, \cite{belomestny18} study a nonparametric Bayesian approach to estimation of the L\'evy measure for L\'evy processes with monotonous sample paths (subordinators), while \cite{gugu15}, \cite{gugu16} and \cite{gugu19} investigate the same problem for compound Poisson processes. There exists also a limited amount of work on nonparametric Bayesian volatility estimation in diffusion models, e.g.\ \cite{batz17} and \cite{nickl17}, but a paper that is most related to the present one is \cite{gugu18b}. Finally, \cite{koskela17} is a theoretical contribution, where frequentist consistency of a Bayesian approach to inference in jump-diffusion models is established.

\subsection{Our contribution}

Our work is the first contribution to nonparametric Bayesian volatility estimation for L\'evy-driven SDEs. The method we propose is easy to understand, and leads to good practical results in synthetic and real data examples. We expect it to open up new research directions in inference for stochastic differential equations driven by jump processes, both from the practical and theoretical point of views. Our approach is based on a \emph{piecewise constant  approximation} with a proper prior on the corresponding parameters. We show the contraction of the posterior for H\"older continuous volatility coefficients and propose a MCMC  procedure for sampling. 

\begin{exam}
We continue Example~\ref{exam:1} and graphically illustrate our inferential results. In agreement with the notation pertaining to the asymptotic regime detailed in \eqref{eq:sden}
we took $X^n$ as the solution to the L\'evy SDE  \eqref{eq:sde} with the volatility function
\begin{eqnarray}
\label{eq:sigma0}
\sigma_0(x) = \frac{3}{2}+\sin(2\pi x)
\end{eqnarray}
and scaled it with $1/n$, $n=500$. For details of our Bayesian procedure we refer to Section~\ref{subsec:prior}, in the present example this comes down to the following. 

We have partitioned the unit interval into $10$ bins setting $b_k = k \delta x$, $\delta x = 0.1,$ $k=1,\ldots,10,$ and used the piecewise constant prior of the form $\sigma(x)=\sum_k \xi_k \mathbf{1}_{B_k}(x),$ where $\xi_1,\ldots,\xi_{10}$ are i.i.d.\ random variables with an inverse gamma distribution. The posterior is given in closed form  by \eqref{eq:igposterior}. Figure~\ref{fig:ex1post} contrasts the corresponding marginal  $90\,\%$-posterior credible bands for $\sigma$ with the true volatility function $\sigma_0$ from 
\eqref{eq:sigma0}.
\begin{figure}
\begin{center}
\includegraphics[width=0.7\linewidth]{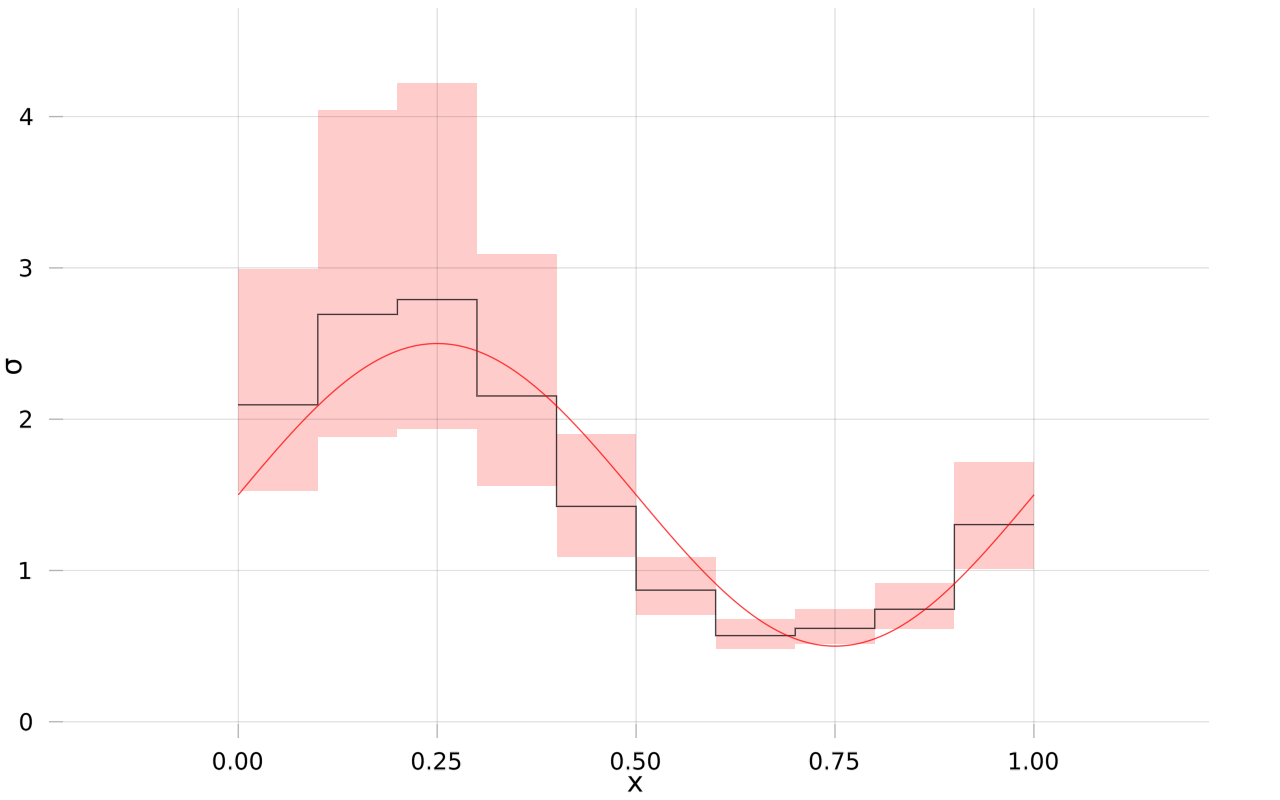}
\end{center}
\caption{Red: true volatility $\sigma_0$ as a function of $x$. Shaded: marginal $90\,\%$-posterior credible band for the piecewise constant posterior $\sigma$. Black: marginal posterior median. }
\label{fig:ex1post}
\end{figure}
\end{exam}

\subsection{Structure of the paper}

The paper is organised as follows: in Section~\ref{sec:approach} we describe in detail our Bayesian method for volatility estimation. 
In Section~\ref{section:pw} we present asymptotic properties of the posterior distribution when the volatility function $\sigma$ is piecewise constant, whereas in Section~\ref{section:holder} we present such properties when the volatility function is H\"older continuous. 
The contraction rate we obtain in the latter case is shown to be minimax optimal.
Real data examples are considered in 
Section~\ref{sec:real}. In Section~\ref{sec:extensions} we consider some extensions and variations of our approach, in particular paying attention to the setting of discrete time observations, and propose a Metropolis-Hastings approach for simulating gamma process bridges.

\subsection{Notation}\label{subsec:notation}
We denote the gamma distribution with shape parameter $a>0$ and rate parameter $b>0$  (hence scale $1/b$) by $\operatorname{Gamma}(a,b)$. Recall that its density is given by
\[
x \mapsto \frac{b^a}{\Gamma(a)} x^{a-1}e^{-b x}, \quad x>0,
\]
where $\Gamma$ is the gamma function. The inverse gamma distribution with shape parameter $a>0$ and scale parameter $b>0$ will be denoted by $\ig(a,b)$. The corresponding density is
\[
x \mapsto \frac{b^a}{\Gamma(a)} x^{-a-1} e^{-b/x}, \quad x>0,
\]
and its expectation and variance are $\frac{b}{a-1}$ and $\frac {b ^{2}}{(a-1)^{2}(a-2)},$ respectively. Following a standard Bayesian convention, we will often use lowercase letters to write random variables. Conditioning of $x$ on $y$ will be denoted by $x\mid y$.

\section{Bayesian approach}
\label{sec:approach}

To compute a posterior distribution,  a likelihood ratio is needed. In this section we study likelihood ratios and existence of a weak solution to Equation~\eqref{eq:sde}. 

\subsection{Likelihood}

Let $\fpspace$ be a filtered probability space and let $(L_t)_{t\geq 0}$ be a gamma process adapted to $\mathbb{F}$, whose L\'evy measure admits the density $v$ given by \eqref{eq:v}.
Assume that $X$ is a (weak) solution to \eqref{eq:sde},
and assume that $X$ is observed on an interval $[0,T]$. We denote by $\pp^\sigma_T$ (a probability measure on $\cf^X_T=\sigma(X_t, 0\leq t\leq T)$) its law. In agreement with this notation we let $\pp^1_T$ be the law of $X$ when $\sigma\equiv1$, in which case $X_t=L_t,\, t\in [0,T]$. The measure $\pp^1_T$ will serve as a reference measure. The choice $\sigma=1$ for obtaining a reference measure is natural, but also arbitrary. Many other choices for the function $\sigma$ are conceivable, in particular other constant functions. In \cite{bgssweak}  the following proposition has been proven.

\begin{prop}\label{prop:z}
Let $\sigma$ be a positive locally bounded measurable function on $[0,
\infty)$ such that \eqref{eq:sde} admits a weak solution that is unique in law. Assume moreover that $\sigma$ is lower bounded by a strictly positive constant. Let $T$ be a  finite (stopping)  time $T>0$. Then the laws $\pp^\sigma_T$ and $\pp^1_T$ are equivalent on $\cf^X_T$ and the corresponding Radon-Nikodym derivative $Z_T$  
has the explicit representation: 
\begin{equation}\label{eq:explicit}
Z_T=\exp\left(\int_{0}^{T}\int_{0}^{\infty}\log Y(t,x)\,\mu^X(\dd x,\dd t)-  \int_0^T\int_{0}^{\infty}(Y(t,x)-1)v(x)\,\dd x\,\dd t  \right),
\end{equation}
where both double integrals are a.s.\ finite. 
\end{prop}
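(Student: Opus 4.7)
The plan is to invoke a Girsanov-type change-of-measure theorem for random measures (Jacod-Shiryaev, Theorem~III.5.19 or III.3.24). Under the reference measure $\pp^1_T$ one has $X=L$, so the jump measure $\mu^X$ has compensator $v(x)\,\dd x\,\dd t$. Under $\pp^\sigma_T$ a jump $\Delta L_t=y$ of $L$ produces a jump $\Delta X_t=\sigma(X_{t-})y$ of $X$, and a predictable change of variables shows that the compensator of $\mu^X$ becomes $\sigma(X_{t-})^{-1}v\bigl(x/\sigma(X_{t-})\bigr)\,\dd x\,\dd t$. The candidate Girsanov integrand is therefore the ratio of compensators,
\[
Y(t,x)=\frac{\sigma(X_{t-})^{-1}v\bigl(x/\sigma(X_{t-})\bigr)}{v(x)},
\]
and the candidate density process is precisely the stochastic exponential exhibited in \eqref{eq:explicit}.

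For the gamma density $v(x)=\alpha x^{-1}e^{-\beta x}$ the factor $x^{-1}$ cancels, yielding the remarkably clean expression
\[
Y(t,x)=\exp\!\left(\beta x\Bigl(1-\tfrac{1}{\sigma(X_{t-})}\Bigr)\right),
\]
and this cancellation is the structural fact that drives the whole proof. A Taylor expansion gives $Y(t,x)-1=O(x)$ near zero, which tames the $x^{-1}$ singularity of $v$ under integration; for large $x$ one has $Y(t,x)\le e^{\beta x(1-1/\sigma_{\max})}$, where $\sigma_{\max}$ is an upper bound on $\sigma$ along the (random) range $[0,X_T]$ of $X$, and the factor $e^{-\beta x}$ in $v$ restores integrability. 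Local boundedness of $\sigma$ together with its strictly positive lower bound thus makes the second integral in \eqref{eq:explicit} a.s.\ finite. For the first, non-decreasingness of $X$ gives $\int_0^T\!\int_0^\infty x\,\mu^X(\dd x,\dd t)=X_T<\infty$ a.s., so $|\log Y(t,x)|\le\beta x\,|1-1/\sigma(X_{t-})|$ is $\mu^X$-integrable.

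It then remains to verify that the exponential in \eqref{eq:explicit} is a true $\pp^1$-martingale and that the transformed measure actually coincides with $\pp^\sigma_T$. The standard device is to localise with $T_m=\inf\{t\ge 0 : X_t\ge m\}\wedge T$: on $[0,T_m]$ the process $\sigma(X_{t-})$ is bounded above and below, so $Y$ is bounded and bounded away from zero, whence the stopped stochastic exponential is a genuine martingale by a Jacod-Shiryaev type criterion. Under the associated measure $Q_m$, the compensator of $\mu^X$ on $[0,T_m]$ equals $Y(t,x)v(x)\,\dd x\,\dd t$, which by construction is the compensator of a solution of \eqref{eq:sde}; the weak-uniqueness hypothesis together with the Jacod-Shiryaev characterisation of semimartingale laws through their predictable characteristics identifies $Q_m$ with $\pp^\sigma_{T_m}$. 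Letting $m\to\infty$ (using that $X$ does not explode on $[0,T]$ under either measure) completes the identification.

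The principal obstacle is the infinite activity of $L$: the singular compensator $v(x)\,\dd x$ near zero would defeat any naive small-jump bound, and it is exactly the $x^{-1}$ cancellation in $Y$, peculiar to the gamma L\'evy density, together with the $O(x)$ behaviour of $Y-1$ near zero, that rescues the analysis. A secondary subtlety is that only weak uniqueness is available, so the identification of the new measure with $\pp^\sigma_T$ must proceed via matching of predictable characteristics rather than by a pathwise argument on the SDE.
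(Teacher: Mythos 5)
The paper itself does not prove Proposition~\ref{prop:z}: it is quoted from the companion paper \cite{bgssweak}, so there is no internal argument to compare against. Your architecture --- Girsanov for the jump measure of a semimartingale, identification of $Y$ as the ratio of compensators, the cancellation of the $x^{-1}$ singularity special to the gamma L\'evy density, pathwise finiteness of the two integrals (using monotonicity of $X$, local boundedness and the lower bound on $\sigma$), localisation at the exit times of $[0,m]$, and identification of the tilted law via predictable characteristics plus the weak-uniqueness hypothesis --- is the canonical route and is in line with how the result is used later in the paper (your formula for $Y$ matches the one in the proof of Corollary~\ref{cor:lik}).

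There is, however, one step whose stated justification fails. Localising at $T_m=\inf\{t:X_t\ge m\}\wedge T$ bounds $\sigma(X_{t-})$ above and below, but it does \emph{not} make $Y$ ``bounded and bounded away from zero'': $Y(t,x)=\exp\bigl(\beta x(1-1/\sigma(X_{t-}))\bigr)$ is unbounded in the jump-size variable $x$ whenever $\sigma(X_{t-})>1$, and tends to $0$ as $x\to\infty$ whenever $\sigma(X_{t-})<1$; the jump sizes are not localised by stopping $X$. So the appeal to a bounded-$Y$ criterion for the true-martingale property of the stopped exponential does not apply as written, and this is the crux of any Girsanov argument. The repair is standard: use a criterion phrased in terms of a predictable Hellinger-type quantity (Lepingle--M\'emin, or the corresponding Jacod--Shiryaev condition). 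Here Frullani's integral gives $\int_0^\infty(\sqrt{Y(t,x)}-1)^2v(x)\,\dd x=\alpha\log\bigl((1+\sigma(X_{t-}))^2/(4\sigma(X_{t-}))\bigr)$, which on $[0,T_m]$ is bounded by a deterministic constant because $\sigma(X_{t-})$ stays in a compact subset of $(0,\infty)$; this yields uniform integrability of the stopped exponential. Two further points deserve more care than you give them: since $T$ is only an a.s.\ finite stopping time, after $m\to\infty$ you still need an argument (e.g.\ $\ee_{\pp^1}Z_T=1$ via non-explosion of the $\sigma$-equation under the tilted measure, or uniform integrability of $(Z_{T_m})_m$) to get the identity on $\cf^X_T$ and equivalence in both directions, the latter also using strict positivity of $Z_T$; and passing from uniqueness in law of the global SDE to the identification of the \emph{stopped} laws $Q_m=\pp^\sigma_{T_m}$ is a local-uniqueness statement that needs a word of justification. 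None of this invalidates the approach, but as written the martingale step is a genuine gap.
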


\begin{rem}\label{remark:explicit}
For statistical inference on $\sigma$ one needs a realisation of the random quantity $Z_T$, induced by an observed realisation of $X$. The realisation of $Z_T$ is then simply obtained by evaluation of the integrals along a path of $X$. This causes no difficulties as the integrals in $Z_T$ are defined pathwise.
We will need (in Corollary~\ref{cor:lik} below) the result for positive piecewise constant functions $\sigma$, and we will see there that for a realisation of $Z_T$ one only needs the observed values of the $X^n_{\tau^n_k}$. The stopping times $\tau^n_k$ will be specified later.
\end{rem}

\subsection{Piecewise constant volatility prior}\label{subsec:prior}

In our non-parametric Bayesian approach, we will a priori model $\sigma$ as a piecewise constant function. Namely,
\begin{eqnarray}
\label{eq:sigma_pc}
\sigma(x)=\sum_{k=1}^K \xi_k \mathbf{1}_{B_k}(x)
\end{eqnarray}
for bins $B_1=[0,b_1]$,
$
B_k=(b_{k-1},b_k], k=2,\ldots,K-1,
$
and  $B_K=(b_{K-1},b_K)$, with appropriately chosen increasing sequence of bin endpoints $\{b_k\}$ and the bin number $K$. The bins $B_k$'s form a partition of the positive halfline $[0,\infty)$. 
The $\{\xi_k\}$ are positive numbers (later on positive random variables). Although we use \eqref{eq:sigma_pc} for our model, we emphasize that the `true' $\sigma$ does not need to be piecewise constant. As a final remark we note that when $\sigma$ is given by \eqref{eq:sigma_pc}, \eqref{eq:sden} still has a unique solution, obtained as concatenation of stopped gamma processes.

\begin{cor}\label{cor:lik}
Suppose that $X^n$ is given by \eqref{eq:sden} with \(\sigma\) given by \eqref{eq:sigma_pc}. Let \(\tau^n_{k}=\inf\{t\geq 0: \, X^n_t\geq b_k\}\), \(k=1,\ldots,K,\)
and write $T=\tau^n_K$. Then
\begin{eqnarray*}
\frac{\dd\pp^\sigma_T}{\dd\pp^1_T}=\exp\left\{ \beta\sum_{k=1}^{K}\left[1-n\xi_{k}^{-1}\right](X^n_{\tau^n_{k}}- X^n_{\tau^n_{k-1}})-\alpha\sum_{k=1}^{K}\left(\tau^n_{k}-\tau^n_{k-1}\right)\log(\xi_{k}/n)\right\}. 
\end{eqnarray*}
\end{cor}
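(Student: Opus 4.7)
The plan is to reduce the scaled SDE \eqref{eq:sden} to the setting of Proposition~\ref{prop:z} and then perform explicit computations exploiting the piecewise constant structure of $\sigma$. First, observe that $X^n$ satisfies the unscaled equation \eqref{eq:sde} with volatility $\widetilde\sigma = \sigma/n$, and under the assumptions of Corollary~\ref{cor:lik} the function $\widetilde\sigma$ is lower bounded by $(\min_k \xi_k)/n>0$ on $[0,b_K)$. Since $T=\tau^n_K$ is a finite stopping time (taking it as fixed finite in the argument, or stopping at $\tau^n_K\wedge N$ and letting $N\to\infty$), Proposition~\ref{prop:z} applies on $\cf^{X^n}_T$, yielding
\begin{equation*}
Z_T=\exp\left(\int_{0}^{T}\!\!\int_{0}^{\infty}\log Y(t,x)\,\mu^{X^n}(\dd x,\dd t)-\int_0^T\!\!\int_{0}^{\infty}(Y(t,x)-1)v(x)\,\dd x\,\dd t\right),
\end{equation*}
where (as identified in \cite{bgssweak}) the Girsanov kernel is the Radon--Nikodym density relating the compensator of the jump measure of $X^n$ under $\pp^{\widetilde\sigma}$ to $v(x)\,\dd x\,\dd t$. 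Since a jump $\Delta X^n_t=\widetilde\sigma(X^n_{t-})\Delta L_t$ has conditional L\'evy density $\alpha x^{-1}\exp(-\beta x/\widetilde\sigma(X^n_{t-}))$, one finds
\begin{equation*}
Y(t,x)=\exp\!\left(-\beta x\left(\tfrac{n}{\sigma(X^n_{t-})}-1\right)\right).
\end{equation*}

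Next I would exploit the piecewise constant form of $\sigma$. By definition of the hitting times $\tau^n_k$ and the fact that $X^n$ is a pure-jump, nondecreasing process, $X^n_{t-}\in B_k$ for $t\in(\tau^n_{k-1},\tau^n_k]$, so $\sigma(X^n_{t-})=\xi_k$ on that interval. Splitting both integrals in $Z_T$ according to $[0,T]=\bigsqcup_{k=1}^K(\tau^n_{k-1},\tau^n_k]$ gives on the $k$th piece a constant-in-$(t,x)$ kernel $Y_k(x)=\exp(-\beta x(n/\xi_k-1))$.

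For the jump-measure integral, $\log Y_k(x)=-\beta(n/\xi_k-1)x$ is linear in $x$, so summing over jumps and using the telescoping identity
\begin{equation*}
\sum_{\tau^n_{k-1}<t\le\tau^n_k}\Delta X^n_t=X^n_{\tau^n_k}-X^n_{\tau^n_{k-1}}
\end{equation*}
(valid since $X^n$ is constant between jumps of $L$) produces
\begin{equation*}
\int_{\tau^n_{k-1}}^{\tau^n_k}\!\!\int_0^\infty\log Y\,\dd\mu^{X^n}=\beta\!\left(1-\tfrac{n}{\xi_k}\right)\!\left(X^n_{\tau^n_k}-X^n_{\tau^n_{k-1}}\right).
\end{equation*}
For the compensator integral, a Frullani-type computation yields
\begin{equation*}
\int_0^\infty\bigl(e^{-\beta x n/\xi_k}-e^{-\beta x}\bigr)\alpha x^{-1}\,\dd x=\alpha\log(\xi_k/n),
\end{equation*}
so the $k$th piece contributes $\alpha\log(\xi_k/n)(\tau^n_k-\tau^n_{k-1})$. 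Summing the two contributions over $k=1,\ldots,K$ and inserting them into the exponent gives exactly the stated expression.

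The only technical point requiring a little care is the Girsanov kernel identification and the fact that the inner integrals are evaluated pathwise (cf.\ Remark~\ref{remark:explicit}); the Frullani evaluation and the telescoping of the jump integral are then immediate, so I expect the main obstacle to be writing down the identification of $Y(t,x)$ precisely enough to justify the substitution $\sigma(X^n_{t-})=\xi_k$ on $(\tau^n_{k-1},\tau^n_k]$.
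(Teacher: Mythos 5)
Your proposal is correct and follows essentially the same route as the paper's proof: identify $Y(t,x)=\exp(-\beta x(n/\sigma(X^n_{t-})-1))$ via Proposition~\ref{prop:z}, split both integrals over the intervals $(\tau^n_{k-1},\tau^n_k]$ where $\sigma(X^n_{t-})=\xi_k$, telescope the jump-measure integral to the increments $X^n_{\tau^n_k}-X^n_{\tau^n_{k-1}}$, and evaluate the compensator via the Frullani integral $\int_0^\infty x^{-1}(e^{-\beta nx/\xi_k}-e^{-\beta x})\,\dd x=\log(\xi_k/n)$, which the paper derives by interchanging the order of integration rather than citing the formula. The only cosmetic difference is that the paper also notes explicitly that the formula remains valid when $\tau^n_{k-1}=\tau^n_k$.
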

\begin{proof}
We have now $Y(t,x)=\exp(-\beta x(\frac{n}{\sigma(X^n_{t-})}-1))$. It follows from Proposition~\ref{prop:z} and Remark~\ref{remark:explicit} that 
\begin{eqnarray*}
\frac{\dd\pp^\sigma_T}{\dd\pp^1_T} 
 & = & \exp\left\{ \int_{(0,T]}\int_{(0,\infty)}\left\{ x\beta\left[1-\frac{n}{\sigma(X^n_{s-})}\right]\right\} \mu^X(\dd s,\dd x) \right.
 \\
 && \left. -\int_{0}^{T}\int_{(0,\infty)}\left(e^{x\beta\left[1-\frac{n}{\sigma(X^n_{s-})}\right]}-1\right)v(x)\,\dd x\,\dd s\right\}=\exp(I_1-I_2), 
\end{eqnarray*}
where
\begin{align*}
I_1 & =  \int_{(0,T]}\int_{(0,\infty)}\left\{ x\beta\left[1-\frac{n}{\sigma(X^n_{s-})}\right]\right\} \mu^X(\dd s,\dd x) \\
& = \sum_{k=1}^K \int_{(\tau^n_{k-1},\tau^n_k]}\int_{(0,\infty)}\left\{ x\beta\left[1-\frac{n}{\sigma(X^n_{s-})}\right]\right\} \mu^X(\dd s,\dd x) \\
& =\sum_{k=1}^K\beta\left[1-\frac{n}{\xi_{k}}\right]\int_{(\tau^n_{k-1},\tau^n_k]}\int_{(0,\infty)}x \mu^X(\dd s,\dd x) =\beta\sum_{k=1}^K\left[1-\frac{n}{\xi_{k}}\right](X^n_{\tau^n_{k}}- X^n_{\tau^n_{k-1}}),
\end{align*}
since $\sigma(X^n_{s-})=\xi_k$ for $s\in(\tau^n_{k-1},\tau^n_k]$ if $\tau^n_{k-1}<\tau^n_k$. But the above expression is also valid if $\tau^n_{k-1}=\tau^n_k$, since then $X^n_{\tau^n_{k}}- X^n_{\tau^n_{k-1}}=0$.
Furthermore, by similar reasoning, 
\begin{align*}
I_2=\sum_{k=1}^K\left(\tau^n_{k}-\tau^n_{k-1}\right)\int_{\mathbb{R}_{+}}\frac{\alpha}{x}\left[e^{-x\beta n\xi_{k}^{-1}}-e^{-\beta x}\right]\,\dd x.
\end{align*}
We need an intermediate result. For any $c>0$ one has 
$\int_{\mathbb{R}_{+}}\frac{1}{x}\left[e^{-x\beta/c}-e^{- x\beta}\right]\,\dd x=\int_{\mathbb{R}_{+}}\frac{1}{x}\left[e^{-x/c}-e^{- x}\right]\,\dd x$ and
\begin{align*}
\int_{\mathbb{R}_{+}}\frac{1}{x}\left[e^{-x/c}-e^{- x}\right]\,\dd x
& = \int_0^\infty\int_1^{1/c} -e^{-ux}\,\dd u\,\dd x\\
& = \int_1^{1/c}\int_0^\infty -e^{-ux}\,\dd x\,\dd u \\
& = \int_1^{1/c}-\frac{1}{u}\,\dd u  = \log c.
\end{align*}
Application of this result to $c=\xi_k/n$ gives $I_2=\alpha\sum_k \left(\tau^n_{k}-\tau^n_{k-1}\right)\log(\xi_k/n)$.
\end{proof}

\section{Observations with piecewise constant volatility}\label{section:pw}

In this section we will provide contraction rates for the posterior distribution in our Bayesian setup, when the true volatility function $\sigma$ is piecewise constant. So, we
consider the process $X^n$ whose true distribution results from $X^n$ being  the solution to
\begin{equation}\label{eq:sde^n}
\dd X^n_t = \frac{1}{n} \sum_{k=1}^K \sigma_k \mathbf{1}_{B_k}(X^n_{t-}) \dd L_t,
\end{equation}
where the constants $\sigma_k$ are assumed to be strictly positive. So $\sigma(x)=\sum_k \sigma_k \mathbf{1}_{B_k}(x)$, in agreement with \eqref{eq:sden}. We assume that the process $X^n$ is observed until the time where it surpasses the fixed and known level $b_K$. We aim at estimating $\sigma$, or equivalently, the sequence $\{\sigma_k\}$, in a consistent way. In order to accomplish this, the observed process has to spend large times in the bins $B_k$, which is effectively the result of the scaling factor $\frac{1}{n}$ for large $n$ in \eqref{eq:sde^n}. In fact, one has that the $\tau^n_k-\tau^n_{k-1}$ are roughly proportional to $n$ for $n\to\infty$, see Proposition~\ref{prop:limittau} below for the precise result. In our Bayesian approach we model the $\sigma_k$ as independent random variables $\xi_k$ and we take inverse gamma distributions as a prior for each of them, that is \(\xi_{k}\sim \ig(\alpha_k,\beta_k)\). Therefore, we have to extend the original probability space to carry the $\xi_k$ as well, taking into account that $L$ and the $\sigma_k$ have to be independent. We can then use Corollary~\ref{cor:lik}, where $\pp^\sigma_T$ is to be interpreted as the conditional law of $X^n$ on $[0,T]$ given the $\xi_k$.

Let $x_1,\ldots,x_K$ denote realisations of $\xi_1,\ldots\xi_K$ and let $X^n$ stand for the path $X_t^n,$  $t\in [0,T]$. With $T=\tau^n_K$, we then have from Corollary~\ref{cor:lik} that the posterior joint density of $(\xi_1,\ldots,\xi_K)$ is given by
\[
\Pi(x_1,\ldots,x_K\mid X^n)\propto\prod_{k=1}^{K}\exp\left(-(n\beta (X^n_{\tau^n_{k}}- X^n_{\tau^n_{k-1}})+\beta_{k})x_{k}^{-1}\right)x_{k}^{-\alpha(\tau^n_{k}-\tau^n_{k-1})-\alpha_{k}-1}.
\]
It follows that the $\xi_k$ are independent under the posterior distribution, and
\begin{equation}\label{eq:igposterior}
\xi_k \mid X^n \sim \operatorname{IG}(\alpha \Delta \tau^n_k + \alpha_k,  n\beta \Delta X_{\tau^n_k} + \beta_k),
\end{equation}
where $\Delta \tau^n_k=\tau^n_{k}-\tau^n_{k-1}$, $\Delta X_{\tau^n_k}=X_{\tau^n_{k}}- X_{\tau^n_{k-1}}$.

\subsection{Result on the overshoot}

To prepare for our first main result, we need a property of the overshoots of $X^n$. The overshoot $\pot^n_k$ is defined as $\pot^n_k:=X^n_{\tau^n_k}-b_k\geq 0$ for $k=0,\ldots,K$. Note that $\pot^n_0=0$. 

\begin{lemma}\label{lemma:overshoot}
Let $\sigma^*_K=\max\{\sigma_1,\ldots,\sigma_K\}$ and $\delta>0$. The probability $\pp(\pot^n_{k} < \delta)$ satisfies the following bound,
\begin{equation}\label{eq:overshoot}
\pp(\pot^n_{k} < \delta) \geq \frac{b_K}{b_K+\delta}(1-\exp(-n\delta\beta/\sigma^*_K)).
\end{equation}
If $\delta_n\to 0$ such that $n\delta_n\to\infty$, then $\pp(\pot^n_{k} > \delta_n) \to 0$.  
\end{lemma}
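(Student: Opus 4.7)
The plan is to pin down the overshoot as a conditional tail of a single jump of the gamma subordinator $L$. Since $L$ has no drift and $X^n$ inherits a pure-jump character from $L$, the crossing time $\tau^n_k$ is necessarily a jump time with $X^n_{\tau^n_k-}<b_k$. Let $j\leq k$ denote the (random) bin containing $X^n_{\tau^n_k-}$, set $U := b_k - X^n_{\tau^n_k-}\in(0,b_k]$ for the undershoot, and write $J := \Delta L_{\tau^n_k}$ for the size of the corresponding jump of $L$. The scaling in \eqref{eq:sde^n} gives $\Delta X^n_{\tau^n_k}=(\sigma_j/n)J$, whence $\{\pot^n_k \geq \delta\} = \{J > n(U+\delta)/\sigma_j\}$.

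Using the strong Markov property at $\tau^n_k$ together with the compensation formula for the Poisson random measure of the jumps of $L$, one argues that conditional on the pre-crossing state $(U,j)$, the size $J$ has the distribution of the L\'evy density $v(y)=\alpha y^{-1}e^{-\beta y}$ restricted and renormalised to $(nU/\sigma_j,\infty)$. Consequently,
\[
\pp(\pot^n_k \geq \delta \mid U,j)=\frac{\int_{n(U+\delta)/\sigma_j}^{\infty} y^{-1}e^{-\beta y}\,\dd y}{\int_{nU/\sigma_j}^{\infty} y^{-1}e^{-\beta y}\,\dd y}.
\]
The substitution $z=y-n\delta/\sigma_j$ in the numerator, combined with the elementary inequality $(z+n\delta/\sigma_j)^{-1}\leq z^{-1}$ for $z>0$, bounds this ratio by $e^{-n\beta\delta/\sigma_j}$, and since $\sigma_j\leq\sigma^*_K$ it is in fact at most $e^{-n\beta\delta/\sigma^*_K}$, uniformly in the conditioning.

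Averaging over $(U,j)$ gives $\pp(\pot^n_k<\delta)\geq 1-e^{-n\beta\delta/\sigma^*_K}$; since $b_K/(b_K+\delta)\leq 1$, the stated inequality \eqref{eq:overshoot} follows a fortiori. For the asymptotic consequence, the hypotheses $\delta_n\to 0$ and $n\delta_n\to\infty$ yield $b_K/(b_K+\delta_n)\to 1$ and $e^{-n\beta\delta_n/\sigma^*_K}\to 0$, so the right-hand side of \eqref{eq:overshoot} tends to $1$ and therefore $\pp(\pot^n_k>\delta_n)\to 0$.

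The main obstacle is making rigorous the identification of the conditional law of the crossing jump in the infinite-activity regime. Although heuristically the crossing jump must lie in $\{y>nU/\sigma_j\}$ and is otherwise distributed according to $v$, a careful argument requires the compensation formula for $\mu^L$ and careful tracking of the $\sigma$-algebra generated by $(X^n_s)_{s<\tau^n_k}$; the rest of the proof is then a straightforward calculus exercise on the exponential tail of the gamma L\'evy density.
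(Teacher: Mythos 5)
Your argument is correct, and it takes a genuinely different route through the key estimate. You and the paper rely on the same underlying fact — conditionally on the undershoot $U=b_k-X^n_{\tau^n_k-}$ (and the bin, hence the scaling $\sigma_j/n$), the crossing jump is distributed according to the gamma L\'evy density restricted to $(nU/\sigma_j,\infty)$ and renormalised — which the paper obtains by citing the overshoot/undershoot joint law (Theorem~5.6 of Kyprianou); you could close the one gap you flag simply by invoking that same result instead of re-deriving it from the compensation formula. Where the proofs diverge is in what is done with this conditional law: the paper first proves that the conditional probability is monotone in the undershoot (a separate differentiation argument), reduces to the worst case $a=b_K$, and then sandwiches $\nu^n_k([a,b))$ between explicit bounds, which is what produces the prefactor $b_K/(b_K+\delta)$ in \eqref{eq:overshoot}. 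You instead bound the conditional tail ratio directly and \emph{uniformly} in the undershoot via the shift $z=y-n\delta/\sigma_j$ and monotonicity of $y\mapsto 1/y$, getting $\pp(\pot^n_k\geq\delta\mid U,j)\leq e^{-n\beta\delta/\sigma_j}\leq e^{-n\beta\delta/\sigma^*_K}$; averaging yields $\pp(\pot^n_k<\delta)\geq 1-e^{-n\beta\delta/\sigma^*_K}$, which is strictly stronger than the stated bound and implies it since $b_K/(b_K+\delta)\leq 1$. Your version is shorter, avoids the monotonicity lemma and the integral bounds, and is slightly more careful in allowing the pre-crossing state to lie in an earlier bin $j\leq k$ (the case $\tau^n_{k-1}=\tau^n_k$, which the paper's final averaging over $[b_{k-1},b_k)$ glosses over); the paper's version, in exchange, produces exactly the constant appearing in the lemma's statement. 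The asymptotic conclusion for $\delta_n\to0$, $n\delta_n\to\infty$ follows in both treatments, and the trivial case $k=0$ ($\pot^n_0=0$) is immediate.
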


\begin{proof}
Since $\pot^n_0=0$, inequality \eqref{eq:overshoot} is trivially true for $k=0$, so we let $k\in\{1,\ldots,K\}$ and $0<a<b_k$.
We start by considering the conditional probability $\pp(\pot^n_{k} < \delta \mid X^n_{\tau^n_{k}-} = b_{k} - a)$ and we claim that  
\begin{equation}\label{eq:claim}
\pp(\pot^n_{k} < \delta \mid X^n_{\tau^n_{k}-} = b_{k} - a)=\frac{\nu^n_k([a,a+\delta))}{\nu^n_k([a,\infty))},
\end{equation}
where $\nu^n_k$ is the L\'evy measure of the process $\frac{\sigma_k}{n}L$. Note that $\nu^n_k([a,b]) = \nu([an/\sigma_k, bn/\sigma_k])$ with $\nu$ the L\'evy measure of $L$.
To see \eqref{eq:claim}, 
we argue as follows. 
First we have
\begin{align*}
\pp(\pot^n_{k} < \delta \mid X^n_{\tau^n_{k}-} = b_{k} - a)
& =
\pp(\Delta X^n_{\tau^n_k} <b_k-X^n_{\tau^n_{k-}}+ \delta \mid X^n_{\tau^n_{k}-} = b_{k} - a) \\
& = \pp(\Delta X^n_{\tau^n_k} <a+ \delta \mid X^n_{\tau^n_{k}-} = b_{k} - a).
\end{align*} 
We will now use \cite[Theorem~5.6]{kyprianou14}, that states in terms of densities (which exist here) and in notation adapted to our situation that the random vector $(X^n_{\tau^n_k}-b_k, b_k-X^n_{\tau^n_k-})$ has joint density (for $x>0$, $0<y<b_k$)
\[
f_{X^n_{\tau^n_k}-b_k, b_k-X^n_{\tau^n_k-}}(x,y)=f_U(b_k-y)f_\nu(x+y),
\]
where $f_U$ is the density of the potential function $U$ (in Kyprianou's terminology; it will turn out that the precise form of $f_U$ is not relevant in our context) and $f_\nu$ the density of the L\'evy measure of the process, in our case $\nu^n_k$, as we effectively deal with properties of the process $\frac{\sigma_k}{n}L$. It follows that $b_k-X^n_{\tau^n_k-}$ has marginal density
\[
f_{b_k-X^n_{\tau^n_k-}}(y)=f_U(b_k-y)\nu^n_k([y,\infty)).
\]
Using the change of variables $(X^n_{\tau^n_k}-b_k, b_k-X^n_{\tau^n_k-})\to (\Delta X^n_{\tau^n_k}, b_k-X^n_{\tau^n_k-})$ one gets from the transformation formula that $(\Delta X^n_{\tau^n_k}, b_k-X^n_{\tau^n_k-})$ has density
\[
f_{\Delta X^n_{\tau^n_k}, b_k-X^n_{\tau^n_k-}}(x,y)=f_{X^n_{\tau^n_k}-b_k, b_k-X^n_{\tau^n_k-}}(x-y,y)=f_U(b_k-y)f_{\nu^n_k}(x),
\]
for $x>y$, $0<y<b_k$ and zero elsewhere. It follows that the conditional density of $\Delta X^n_{\tau^n_k}$ given $b_k-X^n_{\tau^n_k-}=a$ is, for $x>a$,
\[
f_{\Delta X^n_{\tau^n_k}\mid b_k-X^n_{\tau^n_k-}=a}(x)=\frac{f_U(b_k-a)f_{\nu^n_k}(x)}{f_U(b_k-a){\nu^n_k}([a,\infty))}=\frac{f_{\nu^n_k}(x)}{\nu^n_k([a,\infty))}.
\]
Hence,
\begin{align*}
\pp(\Delta X^n_{\tau^n_k} <a+ \delta \mid X^n_{\tau^n_{k}-} = b_{k} - a)
& =
\int_a^{a+\delta}\frac{f_{\nu^n_k}(x)}{{\nu^n_k}([a,\infty))}\,\dd x \\
& =
\frac{{\nu^n_k}(a,a+\delta)}{{\nu^n_k}([a,\infty))},
\end{align*}
which proves \eqref{eq:claim}.
Next we show that $g(a):=\frac{F(a + \delta) - F(a)}{1 - F(a)}$ is decreasing as a function of $a$ for $a\geq\eps$, where $\eps>0$ is sufficiently small.
Here $F$ is the `truncated distribution function' of $\nu$, $F(x)=\nu((\eps,x])$ for $x\geq\eps$. Without loss of generality, we scale $F$ such that $F(\infty)=1$. Then, denoting by $f$ the derivative of $F$, one has
\begin{align*}
\frac{\dd}{\dd a} \frac{F(a + \delta) - F(a)}{1 - F(a)} & = \frac{ (1-F(a) ) f(a+ \delta) - (1- F(a + \delta) ) f(a)}{\underbrace{(1 - F(a))^2}_{=:c > 0}}
\\
& = \frac{1}{c}\int_{a}^\infty f(x) f(a+\delta) \dd x -
\frac{1}{c}\int_{a+\delta}^\infty f(x) f(a) \dd x
\\
& = \frac{1}{c}\int_{a}^\infty \left(  f(x) f(a+\delta) - f(x+\delta) f(a) \right) \dd x. 
\end{align*}
With $\nu(\dd x) = f(x)\dd x= \frac{\alpha}{x} \exp(-\beta x)\dd x$ for every $\alpha$ and $\beta$ (which we later replace with $\frac{n\beta}{\sigma_k}$), 
\begin{align*}
\lefteqn{\frac{\dd}{\dd a} \pp(\pot^n_{k} < \delta \mid X^n_{\tau^n_{k}-} = b_{k} - a) }
\\
& = \frac{1}{c}\int_{a}^\infty \left(  \frac{1}{(a+\delta)x} -  \frac{1}{(x+\delta)a}\right)\exp(-\beta (x + \delta + a))  \dd x 
\\
&= \frac{1}{c}\int_{a}^\infty \underbrace{\left( \frac{\delta(a-x)}{a x (x+\delta)(a+\delta)}\right)}_{\le 0}\exp(-\beta (x + \delta + a))  \dd x  \le 0.
\end{align*}
Noting that $g(a)$ is defined for any $a$ away from zero, we now apply the decreasing behaviour of $g$, $g(a) \geq g(b_K)$ as $a<b_k<b_K$, (with $\beta$ replaced with $n\beta/\sigma_k$) to get the lower bound 
\begin{equation}\label{eq:pot1}
\pp(\pot^n_{k} < \delta \mid X^n_{\tau^n_{k}-} = b_{k} - a)=g(a) \geq g(b_K)= \frac{\nu^n_k([b_K,b_K+\delta))}{\nu^n_k([b_K,\infty))}.
\end{equation}
Next we will obtain upper and lower bounds for the fraction $q(n,a,k,\delta)=\frac{\nu^n_k([a,a+\delta))}{\nu^n_k([a,\infty))}$ for any $a$ and $k$. We write the fraction as a fraction of integrals keeping in mind our model with volatility scaled by $n$ and $L$ a gamma process. We will provide bounds on $\nu^n_k([a,b))$ with $a<b\leq\infty$. We use that for $X^n_{t-}\in B_k$ we have to deal with the L\'evy measure $\nu^n_k$ with density
\[
v^n_k(x)=\frac{\alpha}{x}\exp(-n\beta x/\sigma_k).
\]
We compute
\begin{align*}
\nu^n_k([a,b)) & =\int_a^b\frac{\alpha}{x}\exp(-n\beta x/\sigma_k)\,\dd x \\
& =\int_{an\beta/\sigma_k}^{bn\beta/\sigma_k}\frac{\alpha}{y}\exp(-y)\,\dd y \\
& \leq \int_{an\beta/\sigma_k}^{bn\beta/\sigma_k}\frac{\alpha}{an\beta/\sigma_k}\exp(-y)\,\dd y \\
& = \frac{\alpha\sigma_k}{an\beta}(\exp(-an\beta/\sigma_k)-\exp(-bn\beta/\sigma_k)).
\end{align*}
Via a similar argument, we have a lower bound
\[
\nu^n_k([a,b))\geq \frac{\alpha\sigma_k}{bn\beta}(\exp(-an\beta/\sigma_k)-\exp(-bn\beta/\sigma_k)).
\]
Hence, using the lower bound with $b=a+\delta$ and the upper bound with $b=\infty$, we obtain
\begin{equation}\label{eq:q}
q(n,a,k,\delta)\geq \frac{a}{a+\delta}(1-\exp(-n\delta\beta/\sigma_k)).
\end{equation}
Applying \eqref{eq:q} to \eqref{eq:pot1}, we obtain
\[
\pp(\pot^n_{k} < \delta \mid X^n_{\tau^n_{k}-} = b_{k} - a)\geq
\frac{b_K}{b_K+\delta}(1-\exp(-n\delta\beta/\sigma_k)).
\]
As the above lower bound is a decreasing function of $\sigma_k$, we can make it smaller by replacing $\sigma_k$ with $\sigma^*_K$ and obtain
\[
\pp(\pot^n_{k} < \delta \mid X^n_{\tau^n_{k}-} = b_{k} - a)\geq
\frac{b_K}{b_K+\delta}(1-\exp(-n\delta\beta/\sigma^*_K)). 
\]
Write $\mu^n_k$ for the distribution of $X^n_{\tau^n_{k}-}$. Then the unconditional probability $\pp(\pot^n_{k} < \delta)$ can be written as
\begin{align*}
\pp(\pot^n_{k} < \delta) & = \int_{[b_{k-1},b_k)}\pp(\pot^n_{k} < \delta \mid X^n_{\tau^n_{k}-} = x) \,\mu^n_k (\dd x) \\
& \geq \int_{[b_{k-1},b_k)}\frac{b_K}{b_K+\delta}(1-\exp(-n\delta\beta/\sigma^*_K)) \,\mu^n_k (\dd x) \\
& = \frac{b_K}{b_K+\delta}(1-\exp(-n\delta\beta/\sigma^*_K)), 
\end{align*}
which proves \eqref{eq:overshoot}. 
The final assertion on $\pp(\pot^n_{k} < \delta_n)$ for $\delta_n\to 0$ immediately follows from \eqref{eq:overshoot}.
\end{proof}

\subsection{Posterior contraction rate}\label{section:pwbayes}

Statements on convergence in probability refer to the law $\pp=\pp^{\sigma,n}$ of the process satisfying the SDE \eqref{eq:sden}.
We will denote the posterior distribution of the $\xi_k$ by $\Pi_n$ and posterior expectation and variance by $\ee_{\Pi_n}$ and $\pvar$, respectively. Furthermore we write $\Delta b_k:=b_k-b_{k-1}$ for $k\geq 1$.

\begin{lemma}\label{lemma:deltatau}
Let  $\delta_n>0$ and $G^n_k  =\{\pot^n_k>\delta_n\}$ for $k\geq 0$. Then it holds for $t\geq 0$ and $k\geq 1$ that
\[
\pp\Bigl(L_t<\frac{n(\Delta b_k-\delta_n)}{\sigma_k}\Bigr)\pp((G^n_{k-1})^c)
\leq
\pp(\Delta\tau^n_k> t)
\leq
\pp(L_t<\frac{n\Delta b_k}{\sigma_k}).
\]
\end{lemma}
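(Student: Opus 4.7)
The plan is to express $\Delta\tau^n_k$ as a first-passage time of an independent copy of the driving gamma process, and then use monotonicity together with control of the overshoot at the previous bin.

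First I would exploit the fact that while $X^n$ is in the bin $B_k$ (between times $\tau^n_{k-1}$ and $\tau^n_k$), the volatility equals $\sigma_k$, so the SDE integrates explicitly: for $t\in[\tau^n_{k-1},\tau^n_k]$,
\[
X^n_t = X^n_{\tau^n_{k-1}} + \frac{\sigma_k}{n}\bigl(L_t-L_{\tau^n_{k-1}}\bigr).
\]
Setting $\tilde L_s := L_{\tau^n_{k-1}+s}-L_{\tau^n_{k-1}}$, the strong Markov property of $L$ at the $\mathbb{F}$-stopping time $\tau^n_{k-1}$ gives that $\tilde L$ is again a gamma process (same law as $L$) and is independent of $\mathcal{F}_{\tau^n_{k-1}}$. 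Since $X^n_{\tau^n_{k-1}}=b_{k-1}+\pot^n_{k-1}$, the definition of $\tau^n_k$ becomes
\[
\Delta\tau^n_k=\inf\Bigl\{s\geq 0:\tilde L_s \geq \tfrac{n(\Delta b_k-\pot^n_{k-1})}{\sigma_k}\Bigr\}.
\]
Because $\tilde L$ is a subordinator (monotone), the event $\{\Delta\tau^n_k>t\}$ coincides with $\{\tilde L_t < n(\Delta b_k-\pot^n_{k-1})/\sigma_k\}$.

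For the upper bound I would simply use $\pot^n_{k-1}\geq 0$ to enlarge the threshold, and conclude
\[
\pp(\Delta\tau^n_k>t)=\pp\Bigl(\tilde L_t<\tfrac{n(\Delta b_k-\pot^n_{k-1})}{\sigma_k}\Bigr)\leq \pp\Bigl(L_t<\tfrac{n\Delta b_k}{\sigma_k}\Bigr).
\]
For the lower bound I would restrict to the event $(G^n_{k-1})^c=\{\pot^n_{k-1}\leq\delta_n\}$, which is $\mathcal{F}_{\tau^n_{k-1}}$-measurable and hence independent of $\tilde L$. On this event the threshold $n(\Delta b_k-\pot^n_{k-1})/\sigma_k$ is at least $n(\Delta b_k-\delta_n)/\sigma_k$, so
\[
\pp(\Delta\tau^n_k>t)\geq \pp\Bigl(\tilde L_t<\tfrac{n(\Delta b_k-\delta_n)}{\sigma_k},\,(G^n_{k-1})^c\Bigr)=\pp\Bigl(L_t<\tfrac{n(\Delta b_k-\delta_n)}{\sigma_k}\Bigr)\pp\bigl((G^n_{k-1})^c\bigr),
\]
using independence in the last step together with $\tilde L\stackrel{d}{=}L$.

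The only real subtlety is justifying the two ingredients: (i) the pathwise identity $X^n_t=X^n_{\tau^n_{k-1}}+(\sigma_k/n)(L_t-L_{\tau^n_{k-1}})$ on $[\tau^n_{k-1},\tau^n_k)$, which is immediate from piecewise constancy of $\sigma$ and left-continuity considerations for jumps landing exactly at $b_k$, and (ii) the independence of $\tilde L$ from $\mathcal{F}_{\tau^n_{k-1}}$ via the strong Markov property for Lévy processes. Neither is really an obstacle, but (i) requires a brief note about the jump that carries $X^n$ across $b_k$. The boundary case $k=1$ is automatic since $\tau^n_0=0$ and $\pot^n_0=0$, so both inequalities reduce to equalities with $\delta_n$ playing no role.
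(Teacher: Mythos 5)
Your proof is correct and follows essentially the same route as the paper's: integrate the SDE explicitly within the bin so that $\Delta\tau^n_k$ becomes a passage problem for the post-$\tau^n_{k-1}$ increment of $L$, then use stationarity and the strong Markov property (independence of that increment from $\mathcal{F}_{\tau^n_{k-1}}$, hence from $\pot^n_{k-1}$), bounding the overshoot below by $0$ for the upper bound and above by $\delta_n$ on $(G^n_{k-1})^c$ for the lower bound. The only difference is presentational (you phrase it via the hitting time of the shifted process $\tilde L$ rather than via $\pp(X^n_{t+\tau^n_{k-1}}<b_k)$), which changes nothing of substance.
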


\begin{proof}
Let $t\geq 0$. On the event $\{\Delta\tau^n_k> t\}$ we have $b_{k-1}\leq X^n_{\tau^n_{k-1}}< b_k$ and for $s\in (\tau^n_{k-1},\tau^n_{k}]$, one has $\sigma(X^n_{s-})=\sigma_k$. Hence 
\begin{align}
\pp(\Delta\tau^n_k> t) & = \pp(X^n_{t+\tau^n_{k-1}} < b_k) \nonumber\\
& =\pp\Bigl(X^n_{\tau^n_{k-1}}+\frac{1}{n}\int_{\tau^n_{k-1}}^{t+\tau^n_{k-1}}\sigma(X^n_{s-})\dd L_s<b_k\Bigr) \nonumber\\
& =\pp\Bigl(\sigma_k(L_{t+\tau^n_{k-1}}-L_{\tau^n_{k-1}})<n(b_k-X^n_{\tau^n_{k-1}})\Bigr) \nonumber\\
& =\pp\Bigl(L^{n,k}_t<\frac{n(\Delta b_k-\pot^n_{k-1})}{\sigma_k}\Bigr),\label{eq:tau11}
\end{align} 
where $L^{n,k}_t=L_{t+\tau^n_{k-1}}-L_{\tau^n_{k-1}}$. 
Trivially, this implies the inequality
\begin{equation}\label{eq:lmarkov}
\pp(\Delta\tau^n_k> t) \leq\pp\Bigl(L^{n,k}_t<\frac{n\Delta b_k}{\sigma_k}\Bigr)=\pp\Bigl(L_t<\frac{n\Delta b_k}{\sigma_k}\Bigr),
\end{equation}
where the  equality follows from the strong Markov property of $L$ and stationarity of its increments. This gives the upper bound. Next we consider a lower bound for \eqref{eq:tau11},
\begin{align*}\label{eq:pllt1}
\pp\Bigl(L^{n,k}_t<\frac{n(\Delta b_k-\pot^n_{k-1})}{\sigma_k}\Bigr) & \geq   
\pp\Bigl(\{L^{n,k}_{t}<\frac{n(\Delta b_k-\pot^n_{k-1})}{\sigma_k}\}\cap (G^n_{k-1})^c\Bigr) \\
& \geq \pp\Bigl(\{L^{n,k}_{t}<\frac{n(\Delta b_k-\delta_n)}{\sigma_k}\}\cap (G^n_{k-1})^c\Bigr).
\end{align*}
By independence of $L_{t+\tau^n_{k-1}}-L_{\tau^n_{k-1}}$ and $X^n_{\tau^n_{k-1}}$, the strong Markov property of $L$ and stationarity of its increments, the last probability  is equal to 
\[
\pp\Bigl(L_t<\frac{n(\Delta b_k-\delta_n)}{\sigma_k}\Bigr)\pp((G^n_{k-1})^c),
\]
which is the desired lower bound.
\end{proof}

\begin{prop}\label{prop:limittau}
Let $\Delta\bar\tau^n_k=\frac{n\Delta b_k\beta}{\alpha \sigma_k}$.
For $c_n\to\infty$ such that $c_n n^{-\half}\to 0$ it holds that 
\[
\pp\Bigl((1-\frac{c_n}{\sqrt{n}})\Delta\bar\tau^n_k \leq \Delta\tau^n_k \leq (1+\frac{c_n}{\sqrt{n}})\Delta\bar\tau^n_k\Bigr)\to 1.
\]
\end{prop}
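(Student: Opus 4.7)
My plan is to apply the sandwich bound from Lemma~\ref{lemma:deltatau} at the two endpoints $t_\pm := (1\pm c_n/\sqrt n)\Delta\bar\tau^n_k$, and to control the two events $\{\Delta\tau^n_k > t_+\}$ and $\{\Delta\tau^n_k < t_-\}$ separately by Chebyshev applied to $L_{t_\pm}$, which is Gamma$(\alpha t_\pm,\beta)$ with mean $\alpha t_\pm/\beta$ and variance $\alpha t_\pm/\beta^2$.

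For the upper tail, Lemma~\ref{lemma:deltatau} gives directly
\[
\pp(\Delta\tau^n_k > t_+) \leq \pp\Bigl(L_{t_+} < \frac{n\Delta b_k}{\sigma_k}\Bigr).
\]
Since $\ee L_{t_+} = (1+c_n/\sqrt n)\,n\Delta b_k/\sigma_k$, the event on the right is a deviation of size $(c_n/\sqrt n)\,n\Delta b_k/\sigma_k = c_n \sqrt n\,\Delta b_k/\sigma_k$ below the mean, while the standard deviation of $L_{t_+}$ is of order $\sqrt{t_+/\beta^2} \asymp \sqrt{n}$. Hence by Chebyshev this probability is $O(1/c_n^2) \to 0$.

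For the lower tail I would pick $\delta_n = n^{-1/2}$, so that $\delta_n \to 0$, $n\delta_n \to \infty$, and $\delta_n = o(c_n/\sqrt n)$. By Lemma~\ref{lemma:overshoot} we then have $\pp((G^n_{k-1})^c) \to 1$. The lower bound of Lemma~\ref{lemma:deltatau} gives
\[
\pp(\Delta\tau^n_k > t_-) \geq \pp\Bigl(L_{t_-} < \frac{n(\Delta b_k - \delta_n)}{\sigma_k}\Bigr)\pp((G^n_{k-1})^c),
\]
so it suffices to show the first factor tends to $1$. Its mean is $\ee L_{t_-} = (1-c_n/\sqrt n)\,n\Delta b_k/\sigma_k$, and the threshold minus the mean equals $(c_n\sqrt n\,\Delta b_k - n\delta_n)/\sigma_k$, which by the choice of $\delta_n$ is of order $c_n\sqrt n$ (positive, and of the same order as $c_n$ standard deviations). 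Chebyshev then yields $\pp(L_{t_-} \geq n(\Delta b_k-\delta_n)/\sigma_k) = O(1/c_n^2)$, so the lower bound tends to $1$, and therefore $\pp(\Delta\tau^n_k \leq t_-) \to 0$.

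Combining the two estimates gives $\pp(\Delta\tau^n_k \in [t_-,t_+]) \to 1$, which is the claim. The only slightly delicate point is the balancing of $\delta_n$: it must be chosen so that simultaneously (a) the overshoot event $G^n_{k-1}$ becomes negligible (requires $n\delta_n \to \infty$), and (b) the correction $n\delta_n/\sigma_k$ in the Chebyshev gap for the lower bound is dominated by $c_n\sqrt n\,\Delta b_k/\sigma_k$ (requires $\delta_n = o(c_n/\sqrt n)$). Any $\delta_n$ between $n^{-1+\eps}$ and $c_n/\sqrt n$ works, and $\delta_n = n^{-1/2}$ is a convenient choice.
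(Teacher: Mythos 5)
Your proposal is correct and follows essentially the same route as the paper: the same sandwich from Lemma~\ref{lemma:deltatau} at the endpoints $(1\pm \frac{c_n}{\sqrt{n}})\Delta\bar\tau^n_k$, with Lemma~\ref{lemma:overshoot} neutralising the overshoot factor via a $\delta_n$ satisfying $n\delta_n\to\infty$ while $\delta_n\sqrt{n}$ stays controlled (your choice $\delta_n=n^{-1/2}$ matches the paper's requirement that $\delta_n\sqrt{n}$ be bounded). The only difference is that you bound the resulting gamma tail probabilities by Chebyshev's inequality instead of the paper's standardisation-plus-CLT argument with thresholds diverging to $\pm\infty$; both are valid, and yours is if anything the more elementary.
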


\begin{proof}
We consider
\[
\pp\Bigl(\Delta\tau^n_k \leq \frac{n\Delta b_k\beta}{\alpha \sigma_k} (1+\frac{c_n}{\sqrt{n}})\Bigr)=1-\pp\Bigl(\Delta\tau^n_k> \frac{n\Delta b_k\beta}{\alpha \sigma_k} (1+\frac{c_n}{\sqrt{n}})\Bigr)
\]
and derive an upper bound for it using the lower bound for $\pp(\Delta\tau^n_k<t)$ in Lemma~\ref{lemma:deltatau} and the fact that $L_t$ has the $\Gamma(\alpha t,\beta)$ distribution. One has, with $t=u^n_k:=\frac{n\Delta b_k\beta}{\alpha \sigma_k} (1+\frac{c_n}{\sqrt{n}})$, and in view of of the upper bound in Lemma~\ref{lemma:deltatau},
\[
\pp(\Delta\tau^n_k > u^n_k)
\leq 
\pp\Bigl(L_{u^n_k}<\frac{n\Delta b_k}{\sigma_k}\Bigr).
\]
Note that $\ee L_{u^n_k}=\frac{\alpha}{\beta} u^n_k$ and $\var L_{u^n_k}=\frac{\alpha}{\beta^2} u^n_k$. By the central limit theorem for gamma distributions, 
$\hat L_{u^n_k}=\frac{L_{u^n_k}-\frac{\alpha}{\beta} u^n_k}{\sqrt{\frac{\alpha}{\beta^2} u^n_k}}$ has an asymptotic standard normal distribution.  The probability 
on the right of the above display can be rewritten as 
\[
\pp\Bigl(\hat L_{u^n_k}< \frac{\frac{n\Delta b_k}{\sigma_k}-\frac{\alpha}{\beta} u^n_k}{\sqrt{\frac{\alpha}{\beta^2} u^n_k}}\Bigr).
\]
The term on the right-hand side of the inequality in parentheses is seen to be equal to
\(-\frac{c_n}{\sigma_k}\sqrt{\frac{\beta\sigma_k\Delta b_k}{1+\frac{c_n}{\sqrt{n}}}}.\)
This term tends to minus infinity and so $\pp(\Delta\tau^n_k > u^n_k)\to 0$.
Next we consider $\pp(\Delta\tau^n_k > l^n_k)$ with $l^n_k:=\frac{n\Delta b_k\beta}{\alpha \sigma_k} (1-\frac{c_n}{\sqrt{n}})$ and show that this probability tends to one. First we use the lower bound in Lemma~\ref{lemma:deltatau},
\[
\pp(\Delta\tau^n_k > l^n_k)
\geq 
\pp\Bigl(L_{l^n_k}<\frac{n(\Delta b_k-\delta_n)}{\sigma_k}\Bigr)\pp((G^n_{k-1})^c).
\]
Note that $\pp((G^n_{k-1})^c)\to 1$ by Lemma~\ref{lemma:overshoot}.
As for the previous case, we look at the standardisation $\hat L_{l^n_k}$ of $L_{l^n_k}$ and consider
\[
\pp\Bigl(L_{l^n_k}<\frac{n(\Delta b_k-\delta_n)}{\sigma_k}\Bigr)\geq 
\pp\Bigl(\hat L_{l^n_k}
<
\frac{\frac{n(\Delta b_k-\delta_n)}{\sigma_k}-\frac{\alpha}{\beta} l^n_k}{\sqrt{\frac{\alpha}{\beta^2} l^n_k}}\Bigr).
\]
The right hand side of the inequality in parentheses is seen to be equal to
\[
\frac{1}{\sigma_k}\sqrt{\frac{\beta\sigma_k}{1-\frac{c_n}{\sqrt{n}}}}\left(c_n\sqrt{\Delta b_k}-\delta_n\sqrt{\frac{n}{\Delta b_k}}
\right).
\]
This term tends to plus infinity since $\frac{c_n}{\sqrt{n}}\to 0$ and if we choose, as we do, $\delta_n$ such that $\delta_n\sqrt{n}$ is bounded, then $\pp(\Delta\tau^n_k > l^n_k)\to 0$. 
\end{proof}

\begin{lemma}\label{lemma:mse1}
Let $c_n\to\infty$ such that $c_nn^{-\half}\to 0$.
Then the posterior mean squared error $\ee_{\Pi_n}(\xi_k-\sigma_k)^2 =O(\frac{c_n^2}{n})$ on a set of probability tending to one for all $k=1,\ldots,K$.
\end{lemma}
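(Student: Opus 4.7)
The plan is to use the bias--variance decomposition
\[
\ee_{\Pi_n}(\xi_k-\sigma_k)^2 \;=\; \pvar(\xi_k) + (\ee_{\Pi_n}\xi_k-\sigma_k)^2
\]
and bound each piece via the explicit inverse-gamma moment formulas combined with the tail estimates from Lemma~\ref{lemma:overshoot} and Proposition~\ref{prop:limittau}. From \eqref{eq:igposterior},
\[
\ee_{\Pi_n}\xi_k \;=\; \frac{n\beta\,\Delta X_{\tau^n_k}+\beta_k}{\alpha\Delta\tau^n_k+\alpha_k-1},\qquad
\pvar(\xi_k) \;=\; \frac{(n\beta\,\Delta X_{\tau^n_k}+\beta_k)^2}{(\alpha\Delta\tau^n_k+\alpha_k-1)^2(\alpha\Delta\tau^n_k+\alpha_k-2)},
\]
so the whole argument reduces to showing that on a high-probability event both $\Delta\tau^n_k$ and $\Delta X_{\tau^n_k}$ concentrate sharply around their nominal values $\Delta\bar\tau^n_k=n\Delta b_k\beta/(\alpha\sigma_k)$ and $\Delta b_k$, respectively.

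I would pick $\delta_n = n^{-1/2}$, which is compatible both with the requirement $n\delta_n\to\infty$ of Lemma~\ref{lemma:overshoot} and with $\delta_n\sqrt{n}$ bounded as used in the proof of Proposition~\ref{prop:limittau}. Intersecting, over the finitely many indices $k=0,\ldots,K$, the events $\{\pot^n_k<\delta_n\}$ with the events $\{|\Delta\tau^n_k/\Delta\bar\tau^n_k-1|\leq c_n/\sqrt{n}\}$ gives an event $A_n$ with $\pp(A_n)\to 1$ by a finite union bound. On $A_n$, using the identity $\Delta X_{\tau^n_k} = \Delta b_k+\pot^n_k-\pot^n_{k-1}$, one has $\Delta X_{\tau^n_k} = \Delta b_k(1+O(n^{-1/2}))$ and $\alpha\Delta\tau^n_k = (n\Delta b_k\beta/\sigma_k)(1+O(c_n/\sqrt{n}))$, with implicit constants depending only on $\sigma_k,\Delta b_k,\beta$.

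Substituting these expansions into the moment formulas, the bounded hyperparameters $\alpha_k,\beta_k$ contribute terms of order $1/n$ relative to the leading parts and are absorbed. The mean becomes $\sigma_k(1+O(c_n/\sqrt{n}))$, so that $(\ee_{\Pi_n}\xi_k-\sigma_k)^2 = O(c_n^2/n)$. The variance is a ratio whose numerator is $\Theta(n^2)$ and whose denominator is $\Theta(n^3)$, hence $\pvar(\xi_k) = O(1/n)$. Summing and using $c_n\to\infty$ gives the claimed $O(c_n^2/n)$ bound, uniformly in the finite collection $k=1,\ldots,K$.

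The main technical care lies in the first-order expansion of the posterior mean: one has to check that the biases coming from the hyperparameters $\alpha_k,\beta_k$ and from the overshoots $\pot^n_k,\pot^n_{k-1}$ are both of order $1/\sqrt{n}$, so that they are dominated by the $O(c_n/\sqrt{n})$ relative error in $\Delta\tau^n_k$ coming from Proposition~\ref{prop:limittau}. Everything else is routine order-of-magnitude bookkeeping with the inverse-gamma moments on the event $A_n$.
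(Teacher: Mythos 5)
Your proposal is correct and follows essentially the same route as the paper: the paper likewise bounds the posterior bias and variance separately from the explicit inverse-gamma moments in \eqref{eq:igposterior}, on the high-probability events where $\Delta\tau^n_k$ concentrates (Proposition~\ref{prop:limittau}) and the overshoots are below $\delta_n$ with $\delta_n\sqrt{n}$ bounded and $n\delta_n\to\infty$ (Lemma~\ref{lemma:overshoot}), exactly as your choice $\delta_n=n^{-1/2}$ arranges. Your write-up merely packages this more compactly (explicit bias--variance decomposition, a single intersected event $A_n$), which is consistent with the paper's argument.
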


\begin{proof}
Recall the inverse gamma posterior distribution of the $\xi_k$ as given in \eqref{eq:igposterior}. 
We will first consider the posterior bias
\[
\mathrm{bias}_k=\ee_{\Pi_n}\xi_k-\sigma_k=\frac{n\beta\Delta X^n_{\tau^n_k}+\beta_k}{\alpha\Delta \tau^n_k+\alpha_k-1}-\sigma_k.
\]
and  will provide upper and lower bounds for it. Let $\gamma_k=\frac{\beta \Delta b_k}{\alpha\sigma_k}$  
and $\delta_n\to 0$ with $n\delta_n\to \infty$. In addition to the events $G^n_k$ we need the sets 
\begin{align}
F^n_k & =\{\Delta \tau^n_k >\gamma_k n (1+\frac{ c_n}{\sqrt{n}})\} \\
H^n_k & =\{\Delta\tau^n_k <\gamma_k n (1-\frac{ c_n}{\sqrt{n}})\}.
\end{align} 
It follows from Proposition~\ref{prop:limittau} that $\pp(F^n_k)\to 0$ and $\pp(H^n_k)\to 0$. Furthermore, we need $\delta_n$ such that $n\delta_n\to\infty$. From Lemma~\ref{lemma:overshoot} we obtain $\pp(G^n_k)\to 0$.
A first lower bound is given by
\[
\mathrm{bias}_k\geq \frac{n\beta(\Delta b_k-\pot^n_{k-1})+\beta_k}{\alpha\Delta \tau^n_k+\alpha_k-1}-\sigma_k,
\]
which we split on the events $(F^n_k)^c\cap (G^n_{k-1})^c$ and $F^n_k\cup G^n_{k-1}$. As $\pp(F^n_k\cup G^n_{k-1})\to 0$, we ignore the behaviour on $F^n_k\cup G^n_{k-1}$. On the event $(F^n_k)^c\cap (G^n_{k-1})^c$
we can further bound $\mathrm{bias}_k$ from below by
\[
\frac{n\beta (\Delta b_k-\delta_n)+\beta_k}{n\beta \Delta b_k (1+\frac{ c_n}{\sqrt{n}})/\sigma_k+\alpha_k-1}-\sigma_k,
\]
which we compute further as
\[
\sigma_k\frac{ -\delta_n-\frac{ c_n\Delta b_k}{\sqrt{n}}+\frac{\beta_k-(\alpha_k-1)\sigma_k}{n\beta}}{\Delta b_k (1+\frac{ c_n}{\sqrt{n}})+\frac{(\alpha_k-1)\sigma_k}{n\beta}}.
\]
This is seen to be of order $O(\frac{c_n}{\sqrt{n}})$ provided we choose $\delta_n\sqrt{n}$ bounded, as we can do, still keeping $n\delta_n\to\infty$.
Next we derive an upper bound for the bias.
We trivially have 
\[
\mathrm{bias}_k\leq \frac{n\beta(\Delta b_k+\pot^n_{k})+\beta_k}{\alpha\Delta \tau^n_k+\alpha_k-1}-\sigma_k.
\]
On the set $(H^n_k)^c\cap (G^n_{k})^c$, which has probability tending to one, we can further upper bound this by
\[
\sigma_k\frac{ \delta_n+\frac{ c_n\Delta b_k}{\sqrt{n}}+\frac{\beta_k-(\alpha_k-1)\sigma_k}{n\beta}}{\Delta b_k (1-\frac{ c_n}{\sqrt{n}})+\frac{(\alpha_k-1)\sigma_k}{n\beta}},
\]
which is again of order $O(\frac{c_n}{\sqrt{n}})$ for $\delta_n\sqrt{n}$ bounded. We conclude that $\mathrm{bias}_k$ is of order $O(\frac{c_n}{\sqrt{n}})$ on an event with probability tending to one.
\medskip\\
We move on to the posterior variance and derive upper and lower bounds on it. Recall
\[
\pvar\xi_k=\frac{(n\beta\Delta X^n_{\tau^n_k}+\beta_k)^2}{(\alpha\Delta\tau^n_k+\alpha_k-1)^2(\alpha\Delta\tau^n_k+\alpha_k-2)}.
\]
On the set $(F^n_k)^c\cap (G^n_{k-1})^c$ this is larger than
\[
\frac{(n\beta(\Delta b_k-\delta_n)+\beta_k)^2}{(n\beta \Delta b_k (1+\frac{ c_n}{\sqrt{n}})/\sigma_k+\alpha_k-1)^2(n\beta \Delta b_k (1+\frac{ c_n}{\sqrt{n}})/\sigma_k+\alpha_k-2)},
\]
which is of order $O(\frac{1}{n})$, since $\delta_n\to 0$.
\medskip\\
Next we give an upper bound for the posterior variance on the set $(H^n_k)^c\cap (G^n_{k})^c$, which is
\[
\frac{(n\beta(\Delta b_k+\delta_n)+\beta_k)^2}{(n\beta \Delta b_k (1-\frac{ c_n}{\sqrt{n}})/\sigma_k+\alpha_k-1)^2(n\beta \Delta b_k (1-\frac{ c_n}{\sqrt{n}})/\sigma_k+\alpha_k-2)}.
\]
As for the lower bound, also this bound is of order $O(\frac{1}{n})$.
Combining the properties of posterior bias and variance, we obtain the posterior mean squared error $\ee_{\Pi_n}(\xi_k-\sigma_k)^2$ is of order $O(\frac{c_n^2}{n})$ with probability tending to one. 
\end{proof}

\begin{rem}
The assertion of Lemma~\ref{lemma:mse1} can alternatively be formulated as $\ee_{\Pi_n}(\xi_k-\sigma_k)^2 =O_\pp(\frac{c_n^2}{n})$.
\end{rem} 
Our main result of this section is the following theorem, which says that the posterior contraction rate for estimating $\sigma_k$ is $n^{-1/2}$.

\begin{thm}
\label{thm:contr-lip}
Let $(m_n)$ be any sequence of positive real numbers converging to infinity. Then, for $n\to\infty$,
\[
\Pi_n\Bigl(|\xi_k-\sigma_k|>\frac{m_n}{\sqrt{n}}\Bigr)\to 0 \mbox{ in probability}.
\]
\end{thm}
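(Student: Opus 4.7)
My plan is to deduce this directly from Lemma~\ref{lemma:mse1} by a Chebyshev/Markov argument applied to the posterior measure $\Pi_n$. For any fixed $k$, Markov's inequality gives
\[
\Pi_n\Bigl(|\xi_k-\sigma_k|>\frac{m_n}{\sqrt{n}}\Bigr)\leq \frac{n}{m_n^2}\,\ee_{\Pi_n}(\xi_k-\sigma_k)^2.
\]
So it is enough to show that $n\,\ee_{\Pi_n}(\xi_k-\sigma_k)^2$ is, with $\pp$-probability tending to one, of a smaller order than $m_n^2$.

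The key observation is that Lemma~\ref{lemma:mse1} is flexible: it holds for \emph{every} sequence $c_n\to\infty$ with $c_n n^{-1/2}\to 0$, producing an event $A_n=A_n(c_n)$ with $\pp(A_n)\to 1$ on which $\ee_{\Pi_n}(\xi_k-\sigma_k)^2\leq C c_n^2/n$ for some constant $C$ not depending on $n$. The only restriction we really face is the trade-off $c_n\to\infty$ but $c_n=o(\sqrt{n})$. Given the target sequence $m_n\to\infty$, my plan is to choose $c_n$ depending on $m_n$ so that simultaneously $c_n\to\infty$, $c_n/\sqrt{n}\to 0$, and most importantly $c_n/m_n\to 0$. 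A concrete choice that always works, regardless of how slowly $m_n$ diverges, is
\[
c_n:=\min\bigl(\sqrt{m_n},\,n^{1/4}\bigr),
\]
which clearly satisfies all three requirements.

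With this choice, on $A_n$ we obtain
\[
\Pi_n\Bigl(|\xi_k-\sigma_k|>\frac{m_n}{\sqrt{n}}\Bigr)\leq C\,\frac{c_n^2}{m_n^2}\to 0,
\]
deterministically. Since $\pp(A_n)\to 1$, this gives $\Pi_n(|\xi_k-\sigma_k|>m_n/\sqrt{n})\to 0$ in $\pp$-probability, which is the claim. Because the lemma is uniform in $k=1,\ldots,K$, the conclusion likewise holds for each fixed $k$ (and, by a union bound over the finite index set, simultaneously in $k$).

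I do not foresee a genuine obstacle here: the real work has already been carried out in Lemma~\ref{lemma:mse1}, where posterior bias and posterior variance were controlled using the overshoot estimate (Lemma~\ref{lemma:overshoot}) and the CLT-based limit for hitting-time increments (Proposition~\ref{prop:limittau}). The only subtlety is the coupling between the auxiliary sequence $c_n$ in the lemma and the arbitrary sequence $m_n$ in the theorem, which is handled by the concrete choice above.
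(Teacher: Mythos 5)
Your proposal is correct and follows essentially the same route as the paper: Chebyshev's inequality under $\Pi_n$ combined with the posterior mean-squared-error bound of Lemma~\ref{lemma:mse1}, with an auxiliary sequence $c_n$ chosen so that $c_n/m_n\to 0$. The only (cosmetic) difference is that the paper first reduces to slowly diverging $m_n$ and takes $c_n=\sqrt{m_n}$, whereas you handle arbitrary $m_n$ directly via $c_n=\min(\sqrt{m_n},n^{1/4})$.
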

\begin{proof}
It is sufficient to prove the assertion for $m_n$ increasing to infinity slow enough. For such $m_n$, 
let $c_n\to\infty$ such that $\frac{c_n}{m_n}\to 0$, for instance $c_n=\sqrt{m_n}$. Then also $c_nn^{-\half}\to 0$ and by Chebychev's inequality and Lemma~\ref{lemma:mse1} we have for all $k=1,\ldots, K$, that
\[
\Pi_n(|\xi_k-\sigma_k|>\frac{m_n}{\sqrt{n}})\leq \frac{n}{m_n^2}\ee_{\Pi_n}(\xi_k-\sigma_k)^2 =O(\frac{c_n^2}{m_n^2}),
\]
with probability tending to one.
\end{proof}

\section{H\"older continuous volatility}\label{section:holder}

We consider again the process $X^n$ satisfying \eqref{eq:sden}, but the standing assumption in this section is that $\sigma$ is H\"older continuous, that is, there are constants $H\geq 0$ and $0<\lambda\leq 1$ such that for all $x,y>0$ it holds that $|\sigma(x)-\sigma(y)|\leq H|x-y|^\lambda$.  Moreover, $\sigma$ is assumed to be bounded from below by a positive constant $\underline{\sigma}$.  

Here is some further notation for the present section. 
\begin{itemize}
\item
The number of bins and their width depend on $n$. So we write $B^n_k=(b^n_{k-1},b^n_k]$, $k=1,\ldots,K$, $K=K_n$. We assume equidistant bins. Let $b_K$ be the endpoint of the last bin, assumed to be fixed. We take the other bin boundaries $b^n_k$ as $b^n_k=\frac{b_Kk}{K}$, $k=1,\ldots,K.$ A given $x\in (0,b_K]$ then belongs to bin $B^n_k$ with $k=k_n(x)=\lceil \frac{Kx}{b_K}\rceil$.
\item
$\Delta b^n_k=b^n_k-b^n_{k-1}$. Note that for $x\in B^n_k$ it holds that $|\sigma(x)-\sigma^n_k|\leq H(\Delta b^n_k)^\lambda$ for $\sigma^n_k\in\{\sigma(b^n_{k-1}),\sigma(b^n_k)\}$.
\item
If $x\in B^n_k$, we write $\Delta\bar\tau^n_k(x)=\frac{n\Delta b^n_k\beta}{\alpha \sigma(x)}$. 
\item
Furthermore, we assume the number of bins $K=K_n\asymp n^\kappa$ for $0<\kappa<1$. Then, given also the above assumption on the $b^n_k$ and the definition of $\bar\tau^n_k(x)$, one has $\Delta b^n_k\asymp n^{-\kappa}$ and $\bar\tau^n_k(x)\asymp n^{1-\kappa}$ for all $k$ and $x\in B_k$.
\end{itemize}
We observe the process $X^n$ until it crosses the last bin. It follows from Proposition~\ref{prop:deltataunkasymp} below that the time this happens, $\tau^n_K$, is with high probability of order $cn$ with $c$ upperbounded by $\frac{\beta b_K}{\alpha\underline{\sigma}}$.
\medskip\\
Although $\sigma$ is continuous, we model it in our Bayesian approach as a piecewise constant, that is, as
\begin{equation}\label{modelsigma}
\xi^n(x)=\sum_{k=0}^K\xi_k\one_{B^n_k}(x),
\end{equation}
where the $\xi_k$ are assigned the inverse gamma prior distributions as in Section~\ref{section:pw}.

\subsection{Behaviour of $\Delta\tau^n_k$}

We need a variation on Lemma~\ref{lemma:overshoot}. Let $\pot^n_k=X^n_{\tau^n_k}-b_k$ and define for $\delta_n>0$ and $k=0,\ldots,K$ the events $G^n_k=\{\pot^n_{k}>\delta_n\}$. Note that $G^n_0=\emptyset$.

\begin{lemma}\label{lemma:overshoot2}
Let $\sigma^*=\max\{\sigma(x):0\leq x\leq b_K\}$ and $\delta>0$. For all  $n$, $k=0,\ldots,K$, the probability $\pp(\pot^n_{k} < \delta)$ satisfies the following lower bound
\begin{equation}\label{eq:overshoot2}
\pp(\pot^n_{k} < \delta) \geq \frac{b_K}{b_K+\delta}(1-\exp(-n\delta\beta/\sigma^*)).
\end{equation}
If $\delta_n\to 0$ such that $n\delta_n\to\infty$, then $\pp(G^n_k)=\pp(\pot^n_{k} > \delta_n) \to 0$.  
\end{lemma}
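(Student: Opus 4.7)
The proof should mirror that of Lemma~\ref{lemma:overshoot}, adapted to the fact that $\sigma$ is no longer constant within a bin. The first step is to identify the conditional distribution of $\pot^n_k$ given the undershoot $X^n_{\tau^n_k-}=b_k-a$. Since $X^n$ is not a L\'evy process under the present assumptions, Kyprianou's Theorem~5.6 does not apply directly; instead I would invoke the compensation formula for the Poisson random measure of $L$, exploiting that at the crossing time the jump of $X^n$ equals $\sigma(X^n_{\tau^n_k-})/n$ times the corresponding jump of $L$. This yields
\[
\pp(\pot^n_k<\delta\mid X^n_{\tau^n_k-}=b_k-a)
= \frac{\nu\bigl([na/\sigma(b_k-a),\,n(a+\delta)/\sigma(b_k-a))\bigr)}{\nu\bigl([na/\sigma(b_k-a),\infty)\bigr)}
=:q(a,c),
\]
with $c=\sigma(b_k-a)$ and $\nu$ the L\'evy measure of $L$.

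Next, to bound $q$ uniformly in $a$, I need monotonicity in both of its arguments. The $a$-monotonicity (for fixed $c$) and the estimate $q(b_K,c)\geq \frac{b_K}{b_K+\delta}(1-\exp(-n\delta\beta/c))$ are already supplied by the proof of Lemma~\ref{lemma:overshoot}. The genuinely new ingredient is that $c\mapsto q(a,c)$ is decreasing. Writing $q(a,c)=A(c)/B(c)$ with $A(c)=\int_a^{a+\delta}y^{-1}e^{-n\beta y/c}\,\dd y$ and $B(c)=\int_a^\infty y^{-1}e^{-n\beta y/c}\,\dd y$, differentiation yields
\[
A'(c)B(c)-A(c)B'(c)=\frac{n\beta}{c^2}\int_a^{a+\delta}\int_a^\infty \Bigl(\frac{1}{x}-\frac{1}{y}\Bigr)e^{-n\beta(x+y)/c}\,\dd x\,\dd y.
\]
The contribution from the region where $x,y\in[a,a+\delta]$ vanishes by antisymmetry under $x\leftrightarrow y$, and on the complement (where $x\in[a+\delta,\infty)$ and $y\in[a,a+\delta]$) one has $x\geq y$, so the integrand is nonpositive; hence $\partial_c q(a,c)\leq 0$.

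Combining the two monotonicities, since $\sigma(b_k-a)\leq\sigma^*$ for every $a\in(0,b_k]$,
\[
q(a,\sigma(b_k-a))\geq q(a,\sigma^*)\geq q(b_K,\sigma^*)\geq \frac{b_K}{b_K+\delta}\bigl(1-\exp(-n\delta\beta/\sigma^*)\bigr),
\]
and integrating this bound against the marginal law of $X^n_{\tau^n_k-}$ on $[b_{k-1},b_k)$ yields \eqref{eq:overshoot2}. The asymptotic assertion $\pp(G^n_k)\to 0$ follows by taking $\delta=\delta_n$: under $\delta_n\to 0$ and $n\delta_n\to\infty$, $b_K/(b_K+\delta_n)\to 1$ and $\exp(-n\delta_n\beta/\sigma^*)\to 0$, and the overshoot distribution has no atom at $\delta_n$.

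The main obstacle is the $c$-monotonicity of $q$: this is a genuinely new computation, because in the piecewise-constant setting of Lemma~\ref{lemma:overshoot} one can fix $c=\sigma_k$ within a bin and sidestep the issue. A secondary technical point is justifying the ratio formula itself via a compensation argument, in lieu of the Kyprianou-for-subordinators appeal used in the earlier lemma.
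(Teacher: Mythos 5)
Your proof is correct, and it reaches the bound by the same underlying reduction as the paper — replace the local scale by the maximum $\sigma^*$ and fall back on the bounds of Lemma~\ref{lemma:overshoot} — but it executes this reduction quite differently. The paper's proof is a two-line stochastic comparison: conditional on $X^n_{\tau^n_k-}=b_k-a$, it asserts that the overshoot of the variable-$\sigma$ process is dominated by the overshoot $(\pot^n_k)^*$ of the process run with constant volatility $\sigma^*$ on $[\tau^n_{k-1},\tau^n_k)$, and then applies Lemma~\ref{lemma:overshoot} to that comparison process; the monotonicity in the scale parameter is taken as evident and the applicability of the ratio formula to the non-L\'evy process $X^n$ is not discussed. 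You instead make both of these points explicit: you derive the conditional overshoot law $q(a,c)$ with $c=\sigma(b_k-a)$ via the compensation formula for the Poisson random measure of $L$ (a legitimate substitute for the Kyprianou appeal, which indeed does not apply verbatim here since $X^n$ is not L\'evy), and you prove $\partial_c q(a,c)\le 0$ by the computation
\[
A'(c)B(c)-A(c)B'(c)=\frac{n\beta}{c^2}\int_a^{a+\delta}\int_a^\infty \Bigl(\frac{1}{x}-\frac{1}{y}\Bigr)e^{-n\beta(x+y)/c}\,\dd x\,\dd y\le 0,
\]
using the symmetry/sign argument, which is correct. The chain $q(a,\sigma(b_k-a))\ge q(a,\sigma^*)\ge q(b_K,\sigma^*)\ge \frac{b_K}{b_K+\delta}(1-e^{-n\delta\beta/\sigma^*})$, the integration against the law of $X^n_{\tau^n_k-}$, and the limiting statement for $\delta_n$ (where the "no atom" remark is not even needed, since $\pp(\pot^n_k>\delta_n)\le 1-\pp(\pot^n_k<\delta_n)$) all go through. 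In short: the paper buys brevity by leaning on an intuitive domination and on Lemma~\ref{lemma:overshoot}; your version buys rigor at exactly the two spots the paper glosses over, at the cost of one extra derivative computation.
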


\begin{proof}
As in the proof of Lemma~\ref{lemma:overshoot} we first look at $\pp(\pot^n_{k} < \delta \mid X^n_{\tau^n_{k}-} = b^n_{k} - a)$. This probability depends on the values of $\sigma$ on the bin $B^n_{k-1}$, but for all $x$ (in $B^n_{k-1}$) one has $\sigma(x)\leq \sigma^*$. Hence
\[
\pp(\pot^n_{k} < \delta \mid X^n_{\tau^n_{k}-} = b^n_{k} - a)\geq \pp((\pot^n_{k})^* < \delta \mid X^n_{\tau^n_{k}-} = b^n_{k} - a),
\]
where $(\pot^n_{k})^*$ is the overshoot belonging to the process that has constant volatility $\sigma^*$ on $[\tau^n_{k-1},\tau^n_k)$. We can therefore use the lower bounds of Lemma~\ref{lemma:overshoot} to obtain
\eqref{eq:overshoot2}.
\end{proof}
Next we derive bounds on $\pp(\Delta\tau^n_k> t)$.

\begin{lemma}\label{lemma:deltataunk}
Let $0<x<b_K$ and let $k=k_n(x)$ such that $x\in B^n_k$. Let $\delta_n$ be a sequence of positive numbers. Then, for $\Delta b^n_k$ small enough, it holds that
\[
\pp\Bigl(L_t<\frac{n(\Delta b^n_k-\delta_n)}{\sigma(x)+2H(\Delta b^n_k)^\lambda}\Bigr)\pp((G^n_{k-1})^c)
\leq
\pp(\Delta\tau^n_k> t)
\leq
\pp\Bigl(L_t<\frac{n\Delta b^n_k}{\sigma(x)-2H(\Delta b^n_k)^\lambda}\Bigr).
\]
\end{lemma}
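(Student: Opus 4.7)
The plan is to adapt the argument of Lemma~\ref{lemma:deltatau}, replacing the single constant $\sigma_k$ on the $k$-th bin by the tight H\"older bracket $[\sigma(x) - 2H(\Delta b^n_k)^\lambda,\; \sigma(x) + 2H(\Delta b^n_k)^\lambda]$. For any $y \in [b^n_{k-1}, b^n_k]$, the triangle inequality via an endpoint value $\sigma^n_k$ gives $|\sigma(y) - \sigma(x)| \leq 2H(\Delta b^n_k)^\lambda$. For $\Delta b^n_k$ small enough (so that the lower end of the bracket is strictly positive, which is possible because $\sigma(x) \geq \underline{\sigma} > 0$), these two bounds will play the role of the constant $\sigma_k$ used in the piecewise-constant case.

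For the upper bound I would start from
\[
\pp(\Delta\tau^n_k > t) = \pp\Bigl(X^n_{\tau^n_{k-1}} + \frac{1}{n}\int_{\tau^n_{k-1}}^{t+\tau^n_{k-1}}\sigma(X^n_{s-})\,\dd L_s < b^n_k\Bigr),
\]
and note that on $\{\Delta\tau^n_k > t\}$ the integrand $\sigma(X^n_{s-})$ is bounded below by $\sigma(x) - 2H(\Delta b^n_k)^\lambda$, because $X^n_{s-} \in [b^n_{k-1}, b^n_k)$ throughout the integration range. Combined with $X^n_{\tau^n_{k-1}} \geq b^n_{k-1}$, this forces $L_{t+\tau^n_{k-1}} - L_{\tau^n_{k-1}} < n\Delta b^n_k/(\sigma(x) - 2H(\Delta b^n_k)^\lambda)$. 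Invoking the strong Markov property of $L$ at $\tau^n_{k-1}$ and the stationarity of its increments then replaces the increment by $L_t$ in distribution, yielding the upper bound.

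For the lower bound I would work on the event $(G^n_{k-1})^c$, on which $X^n_{\tau^n_{k-1}} < b^n_{k-1} + \delta_n$ and exiting the bin requires a total gain strictly greater than $\Delta b^n_k - \delta_n$. Writing $L^{(k)}_t := L_{t+\tau^n_{k-1}} - L_{\tau^n_{k-1}}$, the argument proceeds by contradiction: assuming both $L^{(k)}_t < n(\Delta b^n_k - \delta_n)/(\sigma(x) + 2H(\Delta b^n_k)^\lambda)$ and $\Delta\tau^n_k \leq t$ hold, then for $s \in (\tau^n_{k-1}, \tau^n_k]$ one has $X^n_{s-} \in [b^n_{k-1}, b^n_k]$ and hence $\sigma(X^n_{s-}) \leq \sigma(x) + 2H(\Delta b^n_k)^\lambda$; integrating gives $X^n_{\tau^n_k} - X^n_{\tau^n_{k-1}} \leq (\sigma(x) + 2H(\Delta b^n_k)^\lambda)\,L^{(k)}_{\Delta\tau^n_k}/n \leq (\sigma(x) + 2H(\Delta b^n_k)^\lambda)\,L^{(k)}_t/n < \Delta b^n_k - \delta_n$, contradicting the gain required for exit. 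Thus $(G^n_{k-1})^c \cap \{L^{(k)}_t < n(\Delta b^n_k - \delta_n)/(\sigma(x) + 2H(\Delta b^n_k)^\lambda)\} \subseteq \{\Delta\tau^n_k > t\}$, and the product form in the lower bound follows from the strong Markov property at $\tau^n_{k-1}$: the event $(G^n_{k-1})^c$ is $\cf_{\tau^n_{k-1}}$-measurable, while $L^{(k)}_t$ is independent of $\cf_{\tau^n_{k-1}}$ with the same law as $L_t$.

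The hard part is the mild circularity in the H\"older bound on $\sigma(X^n_{s-})$: the inequality $|\sigma(X^n_{s-}) - \sigma(x)| \leq 2H(\Delta b^n_k)^\lambda$ is only directly available while $X^n_{s-}$ stays in $B^n_k$, which is essentially the event whose probability we are trying to control. I would handle this by restricting the integration to the stochastic interval $[\tau^n_{k-1}, \tau^n_k]$ on the upper-bound side and to the hypothetical exit step up to time $\Delta\tau^n_k \leq t$ on the lower-bound side; the monotonicity of $X^n$ (as driven by the subordinator $L$) ensures that the process does not re-enter earlier bins, so these intervals behave as expected. The smallness assumption on $\Delta b^n_k$ is used only to keep $\sigma(x) - 2H(\Delta b^n_k)^\lambda$ strictly positive, so that the denominator in the upper bound is meaningful.
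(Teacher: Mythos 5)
Your proposal is correct and follows essentially the same route as the paper: sandwich $\sigma(X^n_{s-})$ over the bin by $\sigma(x)\pm 2H(\Delta b^n_k)^\lambda$ (the paper passes first through an endpoint value $\sigma^n_k\pm H(\Delta b^n_k)^\lambda$, which is only a presentational difference), reduce to the increment $L_{t+\tau^n_{k-1}}-L_{\tau^n_{k-1}}$, use the overshoot event $(G^n_{k-1})^c$ to replace $\pot^n_{k-1}$ by $\delta_n$, and factor via the strong Markov property and stationarity of increments. Your explicit event-inclusion/contradiction argument is a careful rendering of the chain of inequalities the paper states directly, so there is no substantive difference.
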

\begin{proof}
Let $t\geq 0$. On the event $\{\Delta\tau^n_k> t\}$ we have $b^n_{k-1}\leq X^n_{\tau^n_{k-1}}< b^n_k$. Hence 
\begin{align*}
\pp(\Delta\tau^n_k> t) & = \pp(X^n_{t+\tau^n_{k-1}} < b^n_k) \\
& =\pp\Bigl(X^n_{\tau^n_{k-1}}+\frac{1}{n}\int_{\tau^n_{k-1}}^{t+\tau^n_{k-1}}\sigma(X^n_{s-})\dd L_s<b^n_k\Bigr) \\
& =\pp\Bigl(\int_{\tau^n_{k-1}}^{t+\tau^n_{k-1}}\sigma(X^n_{s-})\dd L_s<n(b^n_k-X^n_{\tau^n_{k-1}})\Bigr) \\
& =\pp\Bigl(\int_{\tau^n_{k-1}}^{t+\tau^n_{k-1}}\sigma(X^n_{s-})\dd L_s<n(\Delta b^n_k-\pot^n_{k-1})\Bigr).
\end{align*} 
Note that for $s\in (\tau^n_{k-1},\tau^n_{k}]$, one has $X^n_{s-}\in (b^n_{k-1},b^n_k]$. Let $\sigma^n_k\in\{\sigma(b^n_{k-1}),\sigma(b^n_k)\}$, then for $x\in [b^n_{k-1},b^n_k]$, one has $|\sigma(x)-\sigma^n_k|\leq H(\Delta b^n_k)^\lambda$. Hence, with $L^{n,k}_t=L_{t+\tau^n_{k-1}}-L_{\tau^n_{k-1}}$, one has
\[
(\sigma^n_k-H(\Delta b^n_k)^\lambda)L^{n,k}_t\leq\int_{\tau^n_{k-1}}^{t+\tau^n_{k-1}}\sigma(X^n_{s-})\dd L_s\leq (\sigma^n_k+H(\Delta b^n_k)^\lambda)L^{n,k}_t.
\]
It follows that we have the double inequality
\begin{equation}\label{eq:tau1}
\pp\Bigl(L^{n,k}_t<\frac{n(\Delta b^n_k-\pot^n_{k-1})}{\sigma^n_k+H(\Delta b^n_k)^\lambda}\Bigr)
\leq \pp(\Delta\tau^n_k> t) \leq
\pp\Bigl(L^{n,k}_t<\frac{n(\Delta b^n_k-\pot^n_{k-1})}{\sigma^n_k-H(\Delta b^n_k)^\lambda}\Bigr).
\end{equation}
Trivially, this implies the inequality
\[
\pp(\Delta\tau^n_k> t) \leq\pp(L^{n,k}_t<\frac{n\Delta b^n_k}{\sigma^n_k-H(\Delta b^n_k)^\lambda}).
\]
The latter probability is by stationarity of increments and the strong Markov property of $L$ equal to
\[
\pp\Bigl(L_t<\frac{n\Delta b^n_k}{\sigma^n_k-H(\Delta b^n_k)^\lambda}\Bigr).
\]
If we take $x\in B^n_k$, we have for the latter probability the upper bound
\begin{equation}\label{eq:pllt2}
\pp\Bigl(L_t<\frac{n\Delta b^n_k}{\sigma(x)-2H(\Delta b^n_k)^\lambda}\Bigl)
\end{equation}
as desired. Next we consider the lower bound in \eqref{eq:tau1},
\begin{equation}\label{eq:pllt}
p^n_k=\pp\Bigl(L^{n,k}_t<\frac{n(\Delta b^n_k-\pot^n_{k-1})}{\sigma^n_k+H(\Delta b^n_k)^\lambda}\Bigr),
\end{equation}
and proceed to give a further lower bound for it. With $G^n_{k-1}=\{\pot^n_{k-1}>\delta_n\}$ one has
\[
p^n_k\geq \pp\Bigl(\Bigl\{L^{n,k}_{t}<\frac{n(\Delta b^n_k-\pot^n_{k-1})}{\sigma^n_k+H(\Delta b^n_k)^\lambda}\Bigr\}\cap (G^n_{k-1})^c\Bigr).
\]The latter probability is bounded from below by
\[
\pp\Bigl(\Bigl\{L^{n,k}_{t}<\frac{n(\Delta b^n_k-\delta_n)}{\sigma^n_k+H(\Delta b^n_k)^\lambda}\Bigr\}\cap (G^n_{k-1})^c\Bigr).
\]
By independence of $L_{t+\tau^n_{k-1}}-L_{\tau^n_{k-1}}$ and $X^n_{\tau^n_{k-1}}$, stationarity and the strong Markov property of $L$, this is equal to 
\[
\pp\Bigl(L_t<\frac{n(\Delta b^n_k-\delta_n)}{\sigma^n_k+H(\Delta b^n_k)^\lambda}\Bigr)\pp((G^n_{k-1})^c).
\] 
If we take $x\in B^n_k$, we have for the first probability in the display the lower bound
\begin{equation}\label{eq:pllt3}
\pp\Bigl(L_t<\frac{n(\Delta b^n_k-\delta_n)}{\sigma(x)+2H(\Delta b^n_k)^\lambda}\Bigr).
\end{equation}
This concludes the proof.
\end{proof}
Let $0<x<b_K$ and $k=k_n(x)$ such that $x\in B^n_k$. We next present a result, Proposition~\ref{prop:deltataunkasymp}, on the asymptotic behaviour of $\Delta\tau^n_k(x)$, which shows that, with high probability, it is concentrated near $\Delta\bar\tau^n_k(x)$ as introduced above. For the result we need the condition on the bin width, $\Delta b^n_k\asymp n^{-\kappa}$, and require
\begin{equation}\label{eq:condkappa1}
\kappa\geq \frac{1}{2\lambda+1}.
\end{equation}
Along with this condition we let $\delta_n\asymp n^{-\delta}$, and require 
\begin{equation}\label{eq:conddelta1}
\frac{1+\kappa}{2}\leq\delta<1.
\end{equation}

\begin{prop}\label{prop:deltataunkasymp}
Let $x\in (0,b_K)$ and $x\in B^n_k$, for $k=k_n(x)$. Let $\Delta\bar\tau^n_k(x)=\frac{n\Delta b^n_k\beta}{\alpha \sigma(x)}$.
Under conditions \eqref{eq:condkappa1} and \eqref{eq:conddelta1} and for $c_n\to\infty$ such that $c_n n^{-\half(1-\kappa)}\to 0$ it holds that 
\[
\pp\Bigl(\bigl(1-\frac{c_n}{\sqrt{n\Delta b^n_k}}\bigr)\Delta\bar\tau^n_k(x) \leq \Delta\tau^n_k \leq \bigl(1+\frac{c_n}{\sqrt{n\Delta b^n_k}}\bigr)\Delta\bar\tau^n_k(x)\Bigr)\to 1.
\]
\end{prop}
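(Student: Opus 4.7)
The strategy mirrors that of Proposition~\ref{prop:limittau}, combining the two-sided bound of Lemma~\ref{lemma:deltataunk} with the central limit theorem for $L_t$ (valid since $\alpha t \asymp n^{1-\kappa} \to \infty$). Two new perturbations must be controlled compared to the piecewise-constant case: the Hölder bias of magnitude $H(\Delta b^n_k)^\lambda$ in the effective volatility on each bin, and the overshoot tolerance $\delta_n$ provided by Lemma~\ref{lemma:overshoot2}. Fix $x\in(0,b_K)$, let $k=k_n(x)$, and choose $\delta_n\asymp n^{-\delta}$ as in~\eqref{eq:conddelta1}; since $\delta<1$ gives $n\delta_n\to\infty$, Lemma~\ref{lemma:overshoot2} yields $\pp((G^n_{k-1})^c)\to 1$. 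Lemma~\ref{lemma:deltataunk} then sandwiches $\pp(\Delta\tau^n_k>t)$ between probabilities of the form $\pp(L_t<C)$ for gamma $L_t$, and I standardize by centering with $(\alpha/\beta)t$ and scaling by $\sqrt{\alpha t}/\beta$.

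For the upper tail, I take $u^n_k=(1+c_n/\sqrt{n\Delta b^n_k})\Delta\bar\tau^n_k(x)$ (well-defined since $c_n=o(\sqrt{n\Delta b^n_k})$). Feeding the upper bound of Lemma~\ref{lemma:deltataunk} into the CLT for $L_{u^n_k}$, the standardized threshold equals, up to a positive constant depending only on $\beta$ and $\sigma(x)$,
\[
z^+_n = \sqrt{n\Delta b^n_k}\,\frac{2H(\Delta b^n_k)^\lambda/\sigma(x)}{1-2H(\Delta b^n_k)^\lambda/\sigma(x)} - c_n + o(1).
\]
The first term is of order $n^{1/2-\kappa(\lambda+1/2)}$, which is bounded by~\eqref{eq:condkappa1}, while $c_n\to\infty$; hence $z^+_n\to-\infty$ and $\pp(\Delta\tau^n_k>u^n_k)\to 0$.

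For the lower tail, I take $l^n_k=(1-c_n/\sqrt{n\Delta b^n_k})\Delta\bar\tau^n_k(x)$ and apply the lower bound of Lemma~\ref{lemma:deltataunk} together with $\pp((G^n_{k-1})^c)\to 1$. A parallel standardization gives, up to a positive constant,
\[
z^-_n = c_n - \sqrt{\tfrac{n}{\Delta b^n_k}}\,\delta_n - \sqrt{n\Delta b^n_k}\,\frac{2H(\Delta b^n_k)^\lambda/\sigma(x)}{1+2H(\Delta b^n_k)^\lambda/\sigma(x)} + o(1).
\]
The overshoot term has order $n^{(1+\kappa)/2-\delta}$ and is bounded by~\eqref{eq:conddelta1}; the Hölder term is bounded exactly as for $z^+_n$; therefore $z^-_n\to+\infty$ and $\pp(\Delta\tau^n_k<l^n_k)\to 0$. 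The main obstacle is the bookkeeping in these two standardizations: conditions~\eqref{eq:condkappa1} and~\eqref{eq:conddelta1} are calibrated precisely so that neither the Hölder perturbation $\sqrt{n\Delta b^n_k}(\Delta b^n_k)^\lambda$ nor the overshoot perturbation $\sqrt{n/\Delta b^n_k}\,\delta_n$ dominates the leading fluctuation scale $c_n$, and the interplay among these three scales is what pins down the admissible exponents in the statement.
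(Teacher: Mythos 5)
Your proposal is correct and takes essentially the same route as the paper's proof: it sandwiches $\pp(\Delta\tau^n_k>t)$ via Lemma~\ref{lemma:deltataunk}, uses Lemma~\ref{lemma:overshoot2} to discard the overshoot event, standardizes $L_{u^n_k}$ and $L_{l^n_k}$ and applies the CLT, with conditions \eqref{eq:condkappa1} and \eqref{eq:conddelta1} keeping the H\"older term $\sqrt{n\Delta b^n_k}(\Delta b^n_k)^\lambda$ and the overshoot term $\delta_n\sqrt{n/\Delta b^n_k}$ bounded so that $c_n$ dominates. Your standardized thresholds $z^\pm_n$ agree with the paper's expressions up to multiplicative factors tending to one, so the bookkeeping and conclusions coincide with the published argument.
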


\begin{proof}
We consider for $u^n_k(x):=\frac{n\Delta b^n_k\beta}{\alpha \sigma(x)} (1+\frac{c_n}{\sqrt{n\Delta b^n_k}})$ the probability
\[
\pp(\Delta\tau^n_k \leq u^n_k(x))=1-\pp(\Delta\tau^n_k> u^n_k(x))
\]
and derive a lower bound for it using the upper bound for $\pp(\Delta\tau^n_k<t)$ for $t=u^n_k(x)$ as in Lemma~\ref{lemma:deltataunk}. One has 
\[
\pp(\Delta\tau^n_k > u^n_k(x))
\leq 
\pp\Bigl(L_{u^n_k(x)}<\frac{n\Delta b^n_k}{\sigma^n_k-H(\Delta b^n_k)^\lambda}\Bigr).
\]
Note that $\ee L_{u^n_k(x)}=\frac{\alpha}{\beta} u^n_k(x)$ and $\var L_{u^n_k(x)}=\frac{\alpha}{\beta^2} u^n_k(x)$. 
Hence, by the central limit theorem applied to gamma distributed random variables, $\hat L_{u^n_k(x)}=\frac{L_{u^n_k(x)}-\frac{\alpha}{\beta} u^n_k(x)}{\sqrt{\frac{\alpha}{\beta^2} u^n_k(x)}}$ asymptotically has the standard normal distribution.  The probability 
on the right of the above display is, for large enough $n$ less than $\pp(L_{u^n_k(x)}<\frac{n\Delta b^n_k}{\sigma(x)-2H(\Delta b^n_k)^\lambda})$, which can be rewritten as 
\[
\pp\Bigl(\hat L_{u^n_k(x)}< \frac{\frac{n\Delta b^n_k}{\sigma(x)-2H(\Delta b^n_k)^\lambda}-\frac{\alpha}{\beta} u^n_k(x)}{\sqrt{\frac{\alpha}{\beta^2} u^n_k(x)}}\Bigl).
\]
The right hand side of the inequality in parentheses can be rewritten as
\[
\frac{1}{\sigma(x)-2H(\Delta b^n_k)^\lambda}\sqrt{\frac{\beta\sigma(x)}{1+\frac{c_n}{\sqrt{n\Delta b^n_k}}}}\left(-c_n+\frac{2H}{\sigma(x)}(\Delta b^n_k)^\lambda \sqrt{n\Delta b^n_k}(1+\frac{c_n}{\sqrt{n\Delta b^n_k}})\right).
\]
This term tends to minus infinity if $\frac{c_n}{\sqrt{n\Delta b^n_k}}\to 0$, which is assumed, and if $(\Delta b^n_k)^{\lambda+\half} \sqrt{n}$ is bounded, the latter happens under condition~\eqref{eq:condkappa1}. Under this condition it follows from the central limit theorem that $\pp(\Delta\tau^n_k > u^n_k(x))\to 0$.

Next we consider $\pp(\Delta\tau^n_k > l^n_k(x))$ with $l^n_k(x):=\frac{n\Delta b^n_k\beta}{\alpha \sigma(x)} (1-\frac{c_n}{\sqrt{n\Delta b^n_k}})$ and show that this probability tends to one. First we use the lower bound, taken from Lemma~\ref{lemma:deltataunk},
\[
\pp(\Delta\tau^n_k > l^n_k(x))
\geq 
\pp\Bigl(L_{l^n_k(x)}<\frac{n(\Delta b^n_k-\delta_n)}{\sigma^n_k+H(\Delta b^n_k)^\lambda}\Bigr)\pp((G^n_{k-1})^c).
\]
Lemma~\ref{lemma:overshoot2} says that $\pp((G^n_{k-1})^c)\to 1$ and 
\[
\pp\Bigl(L_{l^n_k(x)}<\frac{n(\Delta b^n_k-\delta_n)}{\sigma^n_k+H(\Delta b^n_k)^\lambda}\Bigr)\geq \pp\Bigl(L_{l^n_k(x)}<\frac{n(\Delta b^n_k-\delta_n)}{\sigma(x)+2H(\Delta b^n_k)^\lambda}\Bigr).
\]
As for the previous case, we look at the standardisation $\hat L_{l^n_k(x)}$ of $L_{l^n_k(x)}$ and consider
\[
\pp\Bigl(L_{l^n_k(x)}<\frac{n(\Delta b^n_k-\delta_n)}{\sigma^n_k+H(\Delta b^n_k)^\lambda}\Bigr)\geq 
\pp\Bigl(\hat L_{l^n_k(x)}
<
\frac{\frac{n(\Delta b^n_k-\delta_n)}{\sigma(x)+2H(\Delta b^n_k)^\lambda}-\frac{\alpha}{\beta} l^n_k(x)}{\sqrt{\frac{\alpha}{\beta^2} l^n_k(x)}}\Bigr).
\]
The right hand side of the inequality in parentheses is seen to be equal to
\[
\frac{1}{\sigma(x)+2H(\Delta b^n_k)^\lambda}\sqrt{\frac{\beta\sigma(x)}{1-\frac{c_n}{\sqrt{n\Delta b^n_k}}}}\left(c_n-\delta_n\sqrt{\frac{n}{\Delta b^n_k}}
-\frac{2H}{\sigma(x)}(\Delta b^n_k)^\lambda \sqrt{n\Delta b^n_k}(1-\frac{c_n}{\sqrt{n\Delta b^n_k}})\right).
\]
This term tends to plus infinity under Condition~\eqref{eq:condkappa1} if $\delta_n\sqrt{\frac{n}{\Delta b^n_k}}$ is bounded, which happens under condition~\eqref{eq:conddelta1}, and $\frac{c_n}{\sqrt{n\Delta b^n_k}}\to 0$, which is assumed. Consequently, by the central limit theorem, $\pp(L_{l^n_k(x)}<\frac{n(\Delta b^n_k-\delta_n)}{\sigma^n_k+H(\Delta b^n_k)^\lambda})\to 1$. 
\end{proof}

\subsection{Posterior contraction rate}

Let $x\in (0,b_K)$ and $k=k_n(x)$ such that $x\in B^n_k$. The $\xi_k$  corresponding to $x$ is, as it depends on $n$, also denoted $\xi^n(x)$ instead of $\xi_k$.

As in  Section~\ref{section:pwbayes}, we consider the posterior mean squared error $\ee_{\Pi_n}(\xi_k-\sigma_k)^2$, which we analyse through the corresponding posterior bias and variance. Then the bias of the posterior mean for $x\in B^n_k$ is
\begin{equation}
\label{eq:bias}
\mathrm{bias}(x)=\ee_{\Pi_n}\xi_k-\sigma(x)=\frac{n\beta(X^n_{\tau^{n}_{k}}- X^n_{\tau^{n}_{k-1}})+\beta_k}{\alpha\Delta \tau^n_k+\alpha_k-1}-\sigma(x).
\end{equation}
The posterior variance is
\[
\pvar\xi_k=\frac{(n\beta\Delta X^n_{\tau^n_k}+\beta_k)^2}{(\alpha\Delta\tau^n_k+\alpha_k-1)^2(\alpha\Delta\tau^n_k+\alpha_k-2)}. 
\]
As before we give upper and lower bounds for posterior bias and variance. To do so we need, along with the already introduced events $G^n_k=\{\pot^n_k>\delta_n\}$, the events $F^n_k=\{\Delta\tau^n_k >\Delta\bar\tau^n_k(x) (1+\frac{c_n}{\sqrt{n\Delta b^n_k}})\}$ and $H^n_k=\{\Delta\tau^n_k <\Delta\bar\tau^n_k(x)  (1-\frac{c_n}{\sqrt{n\Delta b^n_k}})\}$  for $c_n\to\infty$ (arbitrarily slowly). We know from Proposition~\ref{prop:deltataunkasymp} that $\pp(F^n_k)\to 0$ and $\pp(H^n_k)\to 0$, and from Lemma~\ref{lemma:overshoot2} that $\pp(G^n_k)\to 0$. 

\begin{lemma}\label{lemma:mse2}
Assume the model with piecewise constant volatility $\xi(x)$ as given by \eqref{section:pwbayes} whereas the true volatility function $x\mapsto \sigma(x)$ is H\"older continuous and bounded from below by a strictly positive constant $\underline\sigma$.
Assume $\Delta b^n_k\asymp n^{-\kappa}$ and condition \eqref{eq:condkappa1}. 
Let $c_n\to\infty$.
Then the posterior mean squared error $\ee_{\Pi_n}(\xi^n(x)-\sigma(x))^2 =O(\frac{c_n^2}{n\Delta b^n_k})$ for all $k=1,\ldots,K$, uniformly in $x\in [0,b_K]$, with probability tending to one. That is,  
$$
\sup_{x\in [0,b_K]}\ee_{\Pi_n}(\xi^n(x)-\sigma(x))^2 =O\Bigl(\max_k\frac{c_n^2}{n\Delta b^n_k}\Bigr)
$$ 
with probability tending to one. 
\end{lemma}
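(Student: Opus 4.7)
The plan is to follow the bias-variance decomposition used in the proof of Lemma~\ref{lemma:mse1}, writing
\[
\ee_{\Pi_n}(\xi^n(x)-\sigma(x))^2 = \mathrm{bias}(x)^2+\pvar\xi_k
\]
for $x\in B^n_k$, with $\mathrm{bias}(x)$ given by \eqref{eq:bias} and $\pvar\xi_k$ by the inverse gamma formula displayed just above the lemma. Because $\xi^n$ and the prior are piecewise constant on the bins, both quantities depend on $x$ only through $k=k_n(x)$, so the uniform statement reduces to showing that the bound holds simultaneously for all $k\in\{1,\ldots,K_n\}$, i.e.\ after a union bound over bins.

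For a single fixed $k$, I would work on the event $E^n_k=(F^n_k)^c\cap (H^n_k)^c\cap (G^n_{k-1})^c\cap (G^n_k)^c$. On this event, Proposition~\ref{prop:deltataunkasymp} controls $\Delta\tau^n_k$ to within a factor $(1\pm c_n/\sqrt{n\Delta b^n_k})$ of $\Delta\bar\tau^n_k(x)=n\beta\Delta b^n_k/(\alpha\sigma(x))$, and Lemma~\ref{lemma:overshoot2} controls the overshoots so that $\Delta X^n_{\tau^n_k}=\Delta b^n_k+O(\delta_n)$. Plugging these bounds into \eqref{eq:bias} and repeating the algebra of Lemma~\ref{lemma:mse1} produces three contributions to $\mathrm{bias}(x)$: a CLT-type term of order $c_n/\sqrt{n\Delta b^n_k}$, an overshoot term of order $\delta_n/\Delta b^n_k$ absorbed by choosing $\delta_n$ so that $\delta_n\sqrt{n/\Delta b^n_k}$ stays bounded (consistent with \eqref{eq:conddelta1}), and a H\"older discretisation term of order $(\Delta b^n_k)^\lambda$ coming from replacing $\sigma(x)$ by the bin-endpoint value $\sigma^n_k$ as in Lemma~\ref{lemma:deltataunk}. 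Condition \eqref{eq:condkappa1} is precisely what makes $(\Delta b^n_k)^\lambda=O(1/\sqrt{n\Delta b^n_k})$, so the H\"older term is dominated by the CLT term. A parallel calculation shows $\pvar\xi_k=O(1/(n\Delta b^n_k))$ on $E^n_k$, which is also dominated. Squaring and summing delivers the claimed bound $O(c_n^2/(n\Delta b^n_k))$ on $E^n_k$.

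The main obstacle is the uniformity: the pointwise statements $\pp(E^n_k)\to 1$ coming from Proposition~\ref{prop:deltataunkasymp} and Lemma~\ref{lemma:overshoot2} are too weak to survive a union bound over $K_n\asymp n^\kappa$ bins. To close this, I would revisit those proofs and replace the CLT-based estimates for $\pp(F^n_k)$ and $\pp(H^n_k)$ by Chernoff/large-deviation bounds for the $\operatorname{Gamma}(\alpha u^n_k(x),\beta)$ and $\operatorname{Gamma}(\alpha l^n_k(x),\beta)$ variables appearing in Lemma~\ref{lemma:deltataunk}, and similarly sharpen Lemma~\ref{lemma:overshoot2} to an exponential bound $\pp(G^n_k)\leq\exp(-\gamma n\delta_n)$, both uniform in $k$. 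Choosing $c_n$ growing at least like $\sqrt{\log n}$ (still $c_n/\sqrt{n\Delta b^n_k}\to 0$) and $n\delta_n/\log n\to\infty$ (still $\delta_n\to 0$) then ensures $K_n\max_k\bigl(\pp(F^n_k)+\pp(H^n_k)+\pp(G^n_k)\bigr)\to 0$, so that $E^n=\bigcap_{k=1}^{K_n}E^n_k$ has probability tending to one. The rest is then a bin-wise repetition of the MSE analysis with the H\"older correction built in, and the supremum over $x\in[0,b_K]$ follows from the piecewise constant structure of $\xi^n(x)$.
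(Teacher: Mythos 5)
Your per-bin analysis is essentially the paper's own proof: the same bias--variance split of the posterior mean squared error, the same events $F^n_k$, $H^n_k$, $G^n_{k-1}$, $G^n_k$, the same algebra giving a bias of order $c_n/\sqrt{n\Delta b^n_k}$ (with the overshoot contribution $\delta_n/\Delta b^n_k$ controlled via \eqref{eq:conddelta1}, i.e.\ $\delta_n\sqrt{n/\Delta b^n_k}$ bounded) and a posterior variance of order $1/(n\Delta b^n_k)$. One bookkeeping difference: you place the H\"older error $(\Delta b^n_k)^\lambda$ explicitly in the bias, whereas the paper absorbs it inside Proposition~\ref{prop:deltataunkasymp} --- there the events are centred at $\Delta\bar\tau^n_k(x)$ built from $\sigma(x)$ itself, and condition \eqref{eq:condkappa1} is used precisely to keep the H\"older correction $(\Delta b^n_k)^\lambda\sqrt{n\Delta b^n_k}$ bounded; the two accountings are equivalent since \eqref{eq:condkappa1} is exactly $(\Delta b^n_k)^\lambda=O(1/\sqrt{n\Delta b^n_k})$. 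Where you genuinely depart from the paper is the uniformity over bins. The paper performs no union bound: its ``uniformly in $x$'' refers to the deterministic order bounds being uniform because $\sigma$ is bounded above and below on $[0,b_K]$, while the probabilistic input remains the pointwise statements $\pp(F^n_k)\to0$, $\pp(H^n_k)\to0$, $\pp(G^n_k)\to0$ from Proposition~\ref{prop:deltataunkasymp} and Lemma~\ref{lemma:overshoot2}. Your observation that this is too weak to control all $K_n\asymp n^\kappa$ bins simultaneously, and your remedy (replacing the CLT by gamma tail/Chernoff bounds plus a union bound), is a legitimate strengthening not present in the paper, but note two points: (i) Lemma~\ref{lemma:overshoot2} does not yield $\pp(G^n_k)\le\exp(-\gamma n\delta_n)$ --- its bound is of size $\delta_n+e^{-n\delta_n\beta/\sigma^*}$ --- yet this polynomial bound already survives the union bound because \eqref{eq:conddelta1} forces $\delta>\kappa$, so only $F^n_k$ and $H^n_k$ need exponential sharpening; (ii) your fix requires $c_n\gtrsim\sqrt{\log n}$, which is stronger than the lemma's hypothesis (arbitrary $c_n\to\infty$) and, if propagated, would insert a logarithmic factor into Theorem~\ref{thm:rates}, whose proof takes $c_n=\sqrt{m_n}$ with $m_n$ diverging arbitrarily slowly. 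In short, your route buys a rigorous simultaneous-in-$k$ statement at the cost of a slightly stronger choice of $c_n$, while the paper's route keeps the clean rate but treats the uniformity only at the level of the deterministic bounds.
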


\begin{proof}
Let $x\in B^n_k$.
We consider the bias \eqref{eq:bias} first. The bias can be split into its behaviour on the sets $(F^n_k)^c\cap (G^n_{k-1})^c$ and $F^n_k\cup G^n_{k-1}$. As $\pp(F^n_k\cup G^n_{k-1})\to 0$, we only have to analyze what happens on $(F^n_k)^c\cap (G^n_{k-1})^c$.

We give upper and lower bounds for this bias. We start with a first lower bound. As $\Delta X^n_{\tau^n_k}=\Delta b^n_k+\pot^n_k-\pot^n_{k-1}\geq \Delta b^n_k-\pot^n_{k-1}$, and recalling that $\pot^n_{k-1}<\delta_n$ on $(G^n_{k-1})^c$, we obtain that on  $(F^n_k)^c\cap (G^n_{k-1})^c$ one has
\[
\mathrm{bias}(x)\geq \frac{n\beta (\Delta b^n_k-\delta_n)\one_{\{\Delta\tau^n_k>0\}}+\beta_k}{n\beta\Delta b^n_k (1+\frac{c_n}{\sqrt{n\Delta b^n_k}})/\sigma(x)+\alpha_k-1}-\sigma(x),
\]
of which the right hand side can be rewritten as
\[
\sigma(x)\frac{-\frac{\delta_n}{\Delta b^n_k}-\frac{c_n}{\sqrt{n\Delta b^n_k}}+\frac{\beta_k-(\alpha_k-1)\sigma(x)}{\beta n\Delta b^n_k}}{1+\frac{c_n}{\sqrt{n\Delta b^n_k}}+(\alpha_k-1)\frac{\sigma(x)}{\beta n\Delta b^n_k}}.
\]
This term is of order $\frac{c_n}{\sqrt{n\Delta b^n_k}}$ if $\frac{\delta_n}{\Delta b^n_k}\sqrt{n\Delta b^n_k}$ stays bounded, which happens for $\delta_n=n^{-\delta}$ under condition \eqref{eq:conddelta1}. By continuity of $\sigma$ on $[0,b_K]$ this bound is uniform in $x$. Next we turn to an upper bound for the bias. Now we consider the bias on the events $(H^n_k)^c\cap (G^n_k)^c$ and $H^n_k\cup G^n_k$. As $\pp(H^n_k\cup G^n_k)\to 0$, we can ignore the bias on the latter event.
Using $\Delta X^n_{\tau^n_k}=\Delta b^n_k+\pot^n_k-\pot^n_{k-1}\leq \Delta b^n_k+\pot^n_k$, we have on the set $(H^n_k)^c\cap (G^n_k)^c$
\[
\mathrm{bias}(x)\leq \frac{n\beta (\delta_n +\Delta b^n_k)+\beta_k}{n\beta \Delta b^n_k (1-\frac{c_n}{\sqrt{n\Delta b^n_k}})/\sigma(x)+\alpha_k-1}-\sigma(x),
\]
whose right hand side becomes
\[
\sigma(x)\Bigl(\frac{1+\frac{\delta_n}{\Delta b^n_k}+\frac{\beta_k}{n\beta\Delta b^n_k}}{1-\frac{c_n}{\sqrt{n\Delta b^n_k}}+\frac{(\alpha_k-1)\sigma(x)}{n\beta\Delta b^n_k}}-1\Bigr),
\]
and that is equal to
\[
\sigma(x)\frac{\frac{\delta_n}{\Delta b^n_k}+\frac{c_n}{\sqrt{n\Delta b^n_k}}+\frac{\beta_k-(\alpha_k-1)\sigma(x)}{n\beta\Delta b^n_k}}{1-\frac{c_n}{\sqrt{n\Delta b^n_k}}+\frac{(\alpha_k-1)\sigma(x)}{n\beta\Delta b^n_k}}.
\]
Similar to what we have seen for the lower bound of the bias, also the upper bound is of order $\frac{c_n}{\sqrt{n\Delta b^n_k}}$ under condition \eqref{eq:conddelta1}, and uniform in $x$.
Summarising, under the stipulated conditions, we obtain that $\mathrm{bias}(x)$ is of order $\frac{c_n}{\sqrt{n\Delta b^n_k}}$ on a set with probability tending to one.
\medskip\\
We move on to the posterior variance of $\xi_k$ for $x\in B^n_k$, 
\[
\pvar\xi_k=\frac{(n\beta\Delta X^n_{\tau^n_k}+\beta_k)^2}{(\alpha\Delta\tau^n_k+\alpha_k-1)^2(\alpha\Delta\tau^n_k+\alpha_k-2)}, 
\]
for which we will derive upper and lower bounds as well. Paralleling the  computations for the bias, we have on the event $(F^n_k)^c\cap (G^n_{k-1})^c$ the immediate lower bound
\[
\pvar\xi_k  \geq\frac{(n\beta (\Delta b^n_k-\delta_n)+\beta_k)^2}{(n\beta\Delta b^n_k (1+\frac{c_n}{\sqrt{n\Delta b^n_k}})/\sigma(x)+\alpha_k-1)^2(n\beta\Delta b^n_k (1+\frac{c_n}{\sqrt{n\Delta b^n_k}})/\sigma(x)+\alpha_k-2)},
\]
where the right hand side equals
\[
\frac{\sigma(x)^3}{\beta n\Delta b^n_k}
\frac{(1-\frac{\delta_n}{\Delta b^n_k}+\frac{\beta_k}{\beta n\Delta b^n_k})^2}{(1+\frac{c_n}{\sqrt{n\Delta b^n_k}}+\frac{\sigma(x)(\alpha_k-1)}{\beta n\Delta b^n_k})^2(1+\frac{c_n}{\sqrt{n\Delta b^n_k}}+\frac{\sigma(x)(\alpha_k-2)}{\beta n\Delta b^n_k})}.
\]
This is obviously of order $O(\frac{1}{n\Delta b^n_k})$, as $\frac{\delta_n}{\Delta b^n_k}\to 0$.
Next we give an upper bound for the posterior variance, for which we only consider what happens on $(H^n_k)^c\cap (G^n_k)^c$. On that event one has 
\[
\pvar\xi_k\leq\frac{(n\beta (\Delta b^n_k+\delta_n)+\beta_k)^2}{(n\beta \Delta b^n_k (1-\frac{c_n}{\sqrt{n\Delta b^n_k}})/\sigma(x)+\alpha_k-1)^2(n\beta \Delta b^n_k (1-\frac{c_n}{\sqrt{n\Delta b^n_k}})/\sigma(x)+\alpha_k-2)}.
\]
One sees that this quantity is of order $O(\frac{1}{n\Delta b^n_k})$, if $\frac{\delta_n}{\Delta b^n_k}$ tends to zero, which happens under the condition \eqref{eq:conddelta1} for $\delta_n=n^{-\delta}$.
Combining the two results on the bounds, we conclude that the posterior variance is of order $O(\frac{1}{n\Delta b^n_k})=O(n^{-1+\kappa})$ with probability tending to one. As for the bias, also this order bound is uniform in $x$.
As a last step, by the above established properties of posterior bias and variance, we obtain the posterior mean squared error $\ee_{\Pi_n}(\xi^n(x)-\sigma(x))^2$ is of order $O(\frac{c_n^2}{n\Delta b^n_k})=O(\frac{c_n^2}{n^{1-\kappa}})$ with probability tending to one. Again, this order bound is uniform in $x$ (and $k$). 
\end{proof}
Here is the main result of this section, which says that the contraction rate of the posterior distribution is (at least) $n^{-\lambda/(2\lambda+1)}$, $\lambda$ being the H\"older exponent of $\sigma$.

\begin{thm}
\label{thm:rates}
Assume the model with  volatility  \eqref{modelsigma} whereas the true volatility function $x\mapsto \sigma(x)$ is H\"older continuous of order $\lambda\leq 1$ and bounded from below. Let the bin sizes shrink proportional to $n^{-\frac{1}{2\lambda+1}}$, and let $(m_n)$ be any sequence of real numbers (arbitrarily slowly) diverging to infinity. Then, for $n\to\infty$ 
\[
\sup_{x\in [0,b_K]}\Pi_n\Bigl(|\xi^n(x)-\sigma(x)|>m_n n^{-\frac{\lambda}{2\lambda+1}}\Bigr)\to 0 \mbox{ in probability}.
\]
\end{thm}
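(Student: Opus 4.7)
The plan is to reduce everything to a Chebyshev bound, exactly as in Theorem~\ref{thm:contr-lip}, since the hard work has been front-loaded into Lemma~\ref{lemma:mse2} (and the overshoot/hitting-time concentration results behind it).

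First, I would unpack the rate. Choosing the bin width $\Delta b^n_k\asymp n^{-\kappa}$ at the \emph{boundary} of condition \eqref{eq:condkappa1}, that is $\kappa=\frac{1}{2\lambda+1}$, I observe that
\[
n\Delta b^n_k\;\asymp\; n^{1-\kappa}\;=\;n^{2\lambda/(2\lambda+1)},
\]
so that the target rate can be written as $n^{-\lambda/(2\lambda+1)}\asymp (n\Delta b^n_k)^{-1/2}$. This is precisely the square-root of the uniform posterior MSE rate coming out of Lemma~\ref{lemma:mse2}. So the correct book-keeping is: bias-squared and posterior variance are both of order $1/(n\Delta b^n_k)$, up to the logarithmic slack $c_n^2$, and this is the square of the minimax rate for H\"older functions of smoothness $\lambda$.

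Next I would verify that conditions \eqref{eq:condkappa1} and \eqref{eq:conddelta1} can be jointly met with this choice of $\kappa$: taking any $\delta\in[\frac{1+\kappa}{2},1)$ and $\delta_n=n^{-\delta}$ is admissible, and Lemma~\ref{lemma:mse2} applies uniformly in $x\in[0,b_K]$ (and $k$). The output is
\[
\sup_{x\in[0,b_K]} \ee_{\Pi_n}\bigl(\xi^n(x)-\sigma(x)\bigr)^2 \;=\; O\!\left(\frac{c_n^2}{n\Delta b^n_k}\right) \;=\; O\!\left(c_n^2\, n^{-2\lambda/(2\lambda+1)}\right)
\]
on an event whose probability tends to one.

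Finally, to get contraction at rate $m_n n^{-\lambda/(2\lambda+1)}$ for any $m_n\to\infty$, I would mimic the trick at the end of Theorem~\ref{thm:contr-lip}: it suffices to prove it for $m_n$ diverging arbitrarily slowly, so choose $c_n\to\infty$ with $c_n/m_n\to 0$ (for instance $c_n=\sqrt{m_n}$), which also satisfies $c_n n^{-(1-\kappa)/2}\to 0$ as required for Lemma~\ref{lemma:mse2}. Then Markov's inequality yields, uniformly in $x\in[0,b_K]$ and on an event of probability tending to one,
\[
\Pi_n\!\left(|\xi^n(x)-\sigma(x)|>m_n n^{-\lambda/(2\lambda+1)}\right) \;\leq\; \frac{n^{2\lambda/(2\lambda+1)}}{m_n^2}\, \ee_{\Pi_n}\!\bigl(\xi^n(x)-\sigma(x)\bigr)^2 \;=\; O\!\left(\frac{c_n^2}{m_n^2}\right) \;\to\; 0.
\]
Taking $\sup_{x\in[0,b_K]}$ inside the $O(\cdot)$ is legitimate because the MSE bound from Lemma~\ref{lemma:mse2} is already uniform in $x$ (the bins cover $[0,b_K]$ and the constants depend only on $\underline\sigma$, $\sigma^*$, and the H\"older norm).

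There is no serious obstacle left: the genuinely delicate work (controlling $\Delta\tau^n_k$ around $\Delta\bar\tau^n_k(x)$ under H\"older regularity via Proposition~\ref{prop:deltataunkasymp}, and the uniform MSE control in Lemma~\ref{lemma:mse2}) has already been carried out. The only point that requires a moment's care is ensuring that the choice $\kappa=\frac{1}{2\lambda+1}$ simultaneously saturates condition \eqref{eq:condkappa1} and permits a valid $\delta$ in \eqref{eq:conddelta1}, which it does since $\frac{1+\kappa}{2}<1$ for all $\lambda>0$.
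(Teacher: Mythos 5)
Your proposal is correct and follows essentially the same route as the paper: Chebyshev's inequality applied to the uniform posterior mean squared error bound of Lemma~\ref{lemma:mse2}, with $c_n=\sqrt{m_n}$ and $\kappa=\frac{1}{2\lambda+1}$ so that the bound becomes $O(c_n^2/m_n^2)\to 0$ on an event of probability tending to one. Your additional checks (that \eqref{eq:conddelta1} admits a valid $\delta$ and that one may reduce to slowly diverging $m_n$) are sensible but only make explicit what the paper leaves implicit.
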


\begin{proof}
Let $x\in B^n_k$ and $c_n\to\infty$ such that $\frac{c_n}{m_n}\to 0$, for instance $c_n=\sqrt{m_n}$. By Chebychev's inequality and Lemma~\ref{lemma:mse2}, for $\Delta b^n_k\asymp n^{-\kappa}$ with  $\kappa$ such that  \eqref{eq:condkappa1} is satisfied, we have uniformly in $x$
\[
\Pi_n\Bigl(|\xi^n(x)-\sigma(x)|>m_nn^{-\frac{\lambda}{2\lambda+1}}\Bigr)\leq \frac{n^{\frac{2\lambda}{2\lambda+1}}}{m_n^2}\ee_{\Pi_n}(\xi^n(x)-\sigma(x))^2 =O\Bigl(\frac{n^{\frac{2\lambda}{2\lambda+1}}}{m_n^2}\frac{c_n^2}{n\Delta b^n_k}\Bigr),
\]
with probability tending to one. The choice $\kappa=\frac{1}{2\lambda+1}$ satisfies~\eqref{eq:condkappa1} and $\Delta b^n_k\asymp n^{-\frac{1}{2\lambda+1}}$ is assumed. Hence, we see that the order bound becomes $O(\frac{c_n^2}{m_n^2})$, which tends to zero.
\end{proof}

\subsection{Lower bounds}
In this section we show that the contraction rates in Theorem~\ref{thm:rates} are minimax optimal. 
Let \(\Sigma(\lambda,L)\) denote the H\"older class of functions \(f\) on \([0,1]\) satisfying 
\begin{eqnarray*}
|f(x)-f(y)|\leq L|x-y|^\lambda,\quad x,y\in [0,1].
\end{eqnarray*}
We endow \(\Sigma(\lambda,L)\) with the supnorm, denoted $\|\cdot\|_\infty$.
Denote by $\pp^{\sigma,n}_T$ the law of the process $(X_{t}^{n})_{t\in[0,T]}$
solving the Levy-driven SDE, similar to \eqref{eq:sdey},
\[
\dd X_{t}^{n}=\sigma(X_{t-}^{n})\,\dd L_{t}^{n},\quad X_{0}^{n}=0,
\]
where $L^n$ is a gamma process with a L\'evy density  
\[
v_n(x)=\frac{n\alpha}{x}\exp(-n\beta x).
\]
We shall prove the following statement. 
\begin{prop}
There are constants \(c_0,c_1>0\) not depending on \(n\) such that 

\begin{eqnarray}
\label{eq:lower}
\liminf_{n\to \infty}\inf_{\widehat\sigma_n}\sup_{\sigma \in \Sigma(\lambda,L)}\pp^{\sigma,n}_T\Bigl(\|\sigma-\widehat\sigma_n\|_\infty\geq c_0n^{-\lambda/(1+2\lambda)}\Bigr)\geq c_1,
\end{eqnarray}
where the infimum is taken over all estimators \(\widehat\sigma_n,\) that is, all measurable functions of the path  \(X_{t}^{n},\) \(t\in [0,T].\)
\end{prop}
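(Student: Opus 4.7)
The plan is a standard reduction to multiple hypothesis testing via Fano's lemma, with hypotheses built from localised bumps and the key Kullback--Leibler divergences controlled through the explicit likelihood ratio of Proposition~\ref{prop:z}. The rate $n^{-\lambda/(1+2\lambda)}$ is the classical nonparametric minimax rate in supnorm for H\"older classes; it arises here because the martingale part of $X^n$ has quadratic variation of order $1/n$, so the effective sample size is $n$, exactly as for drift estimation in a small-noise diffusion.

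First I would fix $h_n = c_0\, n^{-1/(1+2\lambda)}$ and $M_n = \lfloor 1/h_n\rfloor$, pick a nonnegative $C^\infty$ bump $\phi$ supported on $[0,1]$ with $\|\phi\|_\infty = 1$, and for $\omega\in\{0,1\}^{M_n}$ define
$$
\sigma_\omega(x) = \sigma_* + L h_n^\lambda \sum_{j=0}^{M_n-1} \omega_j\, \phi\!\left(\frac{x - j h_n}{h_n}\right),
$$
where $\sigma_* > 0$ is a baseline large enough that $\sigma_\omega$ is bounded below and satisfies the H\"older constraint (after, if necessary, shrinking $c_0$). The Varshamov--Gilbert lemma produces $\Omega_n \subseteq \{0,1\}^{M_n}$ with $|\Omega_n| \geq 2^{M_n/8}$ and pairwise Hamming distance at least $M_n/8$, and since the bumps have disjoint supports the corresponding $\sigma_\omega$ satisfy $\|\sigma_\omega - \sigma_{\omega'}\|_\infty = L h_n^\lambda \asymp n^{-\lambda/(1+2\lambda)}$ for all distinct $\omega,\omega'\in\Omega_n$.

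Next I would bound $\mathrm{KL}(\pp^{\sigma_\omega,n}_T \,\|\, \pp^{\sigma_{\omega'},n}_T)$. Expressing each measure through its Radon--Nikodym derivative against the common reference $\pp^{1,n}_T$ via Proposition~\ref{prop:z} and taking $\ee_{\sigma_\omega,n}$, the stochastic integral against $\mu^X$ is exactly compensated, leaving a deterministic space-time integral whose integrand is $\psi \log \psi - \psi + 1$ for the intensity ratio $\psi(t,x) = \exp\!\bigl(-n\beta x(\sigma_{\omega'}(X^n_{t-})^{-1} - \sigma_\omega(X^n_{t-})^{-1})\bigr)$. Since $\|\sigma_\omega - \sigma_{\omega'}\|_\infty \to 0$ and both volatilities are uniformly bounded above and away from zero, a second-order Taylor expansion combined with the closed-form gamma integrals of $x^k v_n(x)$ yields the quadratic bound
$$
\mathrm{KL}(\pp^{\sigma_\omega,n}_T \,\|\, \pp^{\sigma_{\omega'},n}_T) \leq C\, n\, \ee_{\sigma_\omega, n}\!\int_0^T \bigl(\sigma_\omega(X^n_{t-}) - \sigma_{\omega'}(X^n_{t-})\bigr)^2 \mathrm{d}t.
$$
A uniform-in-$n$ upper bound on the occupation density of $X^n$ on $[0, b_K]$, obtained from the hitting-time estimates underlying Lemma~\ref{lemma:deltataunk} and Proposition~\ref{prop:deltataunkasymp}, converts this into $\mathrm{KL} \leq C'\, n\, \|\sigma_\omega - \sigma_{\omega'}\|_2^2 \leq C''\, n\, M_n L^2 h_n^{2\lambda+1} \asymp n h_n^{2\lambda} \asymp M_n \asymp \log|\Omega_n|$. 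Taking $c_0$ small enough that $C'' n h_n^{2\lambda} \leq \tfrac{1}{8}\log|\Omega_n|$, Fano's lemma (in the standard form suited to multiple testing) delivers \eqref{eq:lower}.

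The main obstacle is the KL bound. Two technical points demand care. First, the L\'evy density $v_n(x) = n\alpha x^{-1} e^{-n\beta x}$ has a non-integrable singularity at zero, so the expansion of $\psi \log \psi - \psi + 1$ must be carried to second order to cancel the singular first-order term; the factor $n$ in the final estimate emerges from the identity $\int_0^\infty x^2 v_n(x)\, \mathrm{d}x = \alpha/(n\beta^2)$ combined with the factor $n$ introduced by the $\sigma^{-1}$ in $\psi$. Second, replacing the path integral by a spatial $L^2$-norm requires controlling the expected occupation time of $X^n$ in small spatial cells, which is not quite immediate from what is already in the paper but follows from the same strong-Markov/regenerative arguments used in Section~\ref{section:holder}.
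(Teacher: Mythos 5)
Your proposal is correct in substance but takes a genuinely different route from the paper. The paper uses a two-point reduction in the style of Tsybakov (Theorem~2.2(i)): it takes $\sigma_0\equiv 1$ and a single localised perturbation $\sigma_1=1-h^\lambda\psi(\cdot/h)$ with $h\asymp n^{-1/(1+2\lambda)}$, and bounds the total variation distance $V(\pp^{\sigma_0,n}_T,\pp^{\sigma_1,n}_T)$ directly via Kabanov's Hellinger-type inequality for the jump measures, $V^2\lesssim \ee_{\pp^{\sigma_0,n}_T}\int_0^T\int_0^\infty(\sqrt{Y(t,x)}-1)^2v_n(x)\,\dd x\,\dd t\lesssim n h^{2\lambda}\,\ee_{\pp^{\sigma_0,n}_T}\int_0^T\psi^2(X_{t-}/h)\,\dd t$; the decisive occupation estimate $\ee\int_0^T\psi^2(X_{t-}/h)\,\dd t=O(h)$ is then obtained by explicit computation with the $\operatorname{Gamma}(n\alpha t,n\beta)$ marginals of $L^n$ (a Stirling bound plus a Laplace-type splitting), which is precisely why the reference hypothesis is chosen constant: under it $X^n=L^n$ and the marginal densities are in closed form. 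You instead build exponentially many hypotheses via Varshamov--Gilbert and invoke Fano, controlling Kullback--Leibler divergences through the jump-intensity ratio; your quadratic bound $\mathrm{KL}\lesssim n\,\ee\int_0^T(\sigma_\omega-\sigma_{\omega'})^2(X^n_{t-})\,\dd t$ is correct (in fact the $x$-integral can be done exactly, giving a KL rate per unit time of $n\alpha\{\log(\sigma_{\omega'}/\sigma_\omega)+\sigma_\omega/\sigma_{\omega'}-1\}$, comparable to $n(\sigma_\omega-\sigma_{\omega'})^2$ under the two-sided bounds on the volatilities), and the occupation-density step you flag as the missing ingredient is indeed the only extra work, but for this monotone model it is easy: since paths are nondecreasing, the occupation time of an interval of length $\ell$ is at most its crossing time, whose expectation is $O(\ell)$ uniformly in $n$ by optional stopping in the fixed-$T$ parametrisation used in this section (note that Lemma~\ref{lemma:deltataunk} and Proposition~\ref{prop:deltataunkasymp}, which you cite, live in the time-scale-$n$ parametrisation, where the occupation density is of order $n$ rather than uniform; the factor-$n$ bookkeeping comes out to the same $\mathrm{KL}\lesssim n\|\sigma_\omega-\sigma_{\omega'}\|_2^2$ either way). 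What each approach buys: the paper's two-point scheme is shorter and already suffices for the claimed rate (for supnorm separation nothing accumulates across bumps, so multiple hypotheses gain no rate here), while your Fano construction is heavier but would yield the sharper $(\log n/n)^{\lambda/(2\lambda+1)}$ supnorm lower bound if the bandwidth is retuned. One small slip: the H\"older constraint is enforced by shrinking the bump \emph{amplitude} constant (the H\"older seminorm of $Lh^\lambda\phi((\cdot-jh)/h)$ does not decrease with $h$), not by shrinking $c_0$ in $h_n$; this only rescales the final constants.
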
 
It is well known that the posterior cannot converge at a rate faster than the optimal rate of convergence for point estimators, see \cite{ghosal2000convergence}, page 507. Since we have a lower bound and it gives the rate matching our posterior contraction rate, our Bayesian approach is optimal from the frequentist point of view.

\begin{proof}
Our strategy is to follow the approach as in Chapter~2 of \cite{tsybakov2008introduction}. Therefore, our goal is to establish
\begin{equation}\label{eq:tsyb}
\inf_{\hat\sigma_n}\sup_{\sigma \in \Sigma(\lambda,L)}\pp^{\sigma,n}_T (\|\sigma-\widehat\sigma_n\|_\infty\geq s_n) \geq \half(1-V(\pp^{\sigma_0,n}_T,\pp^{\sigma_1,n}_T)),
\end{equation}
where $\sigma_0,\sigma_1$ are two distinct elements of $\Sigma(\lambda,L)$ such that 
\begin{equation}\label{eq:2sn}
\|\sigma_0-\sigma_1\|_\infty\geq 2 s_n, 
\end{equation}
and $V(\pp^{\sigma_0,n}_T,\pp^{\sigma_1,n}_T)$ is the total variation distance between $\pp^{\sigma_0,n}_T$ and $\pp^{\sigma_1,n}_T$. The $s_n$ denotes the desired convergence rate, in our case we aim at $s_n\asymp n^{-\lambda/(2\lambda+1)}$. The result in \eqref{eq:tsyb} results from the exposition in Section~2.2 and Theorem~2.2(i)  together with its proof in \cite{tsybakov2008introduction}. 

We first select $\sigma_0$ and $\sigma_1$.
Fix $h\in(0,1]$ and set 
\[
\sigma_{0}(x)\equiv1,\quad\sigma_{1}(x)=1-h^{\lambda}\psi(x/h),
\]
where $\psi\in \Sigma(\lambda,1/2)$ is a nonnegative monotone decreasing function supported on $[0,1]$ satisfying
$\|\psi\|_{\infty}\leq1,$ $\psi(x)>0$ for $x\in[0,1/2].$ One can take, for example,
\begin{eqnarray*}
\psi(u)=a\exp\left(-\frac{1}{1-u^2}\right)\one_{[0,1]}(u)
\end{eqnarray*}
for \(a>0\) small enough. In fact, taking $a=1/e$ for this choice of $\psi$ we have $\|\psi\|_\infty=\psi(0)=1$.
Note furthermore \(\psi\in \Sigma(1,1/2)\), implying \(\psi\in \Sigma(\lambda,1/2)\) for any \(\lambda\leq 1\) and \(\sigma_{1}\in \Sigma(\lambda,1/2).\)  
Moreover, now \( \|\sigma_0-\sigma_1\|_\infty=  h^\lambda\). Later we will choose $h$ and $s_n$ such that $h^\lambda\geq 2s_n$ in order that \eqref{eq:2sn} holds.

We will next show that $V(\pp^{\sigma_0,n}_T,\pp^{\sigma_1,n}_T)$ is eventually less than some constant $v<1$, after which we can choose $c_1=\half(1-v)$ in \eqref{eq:tsyb} to obtain \eqref{eq:lower}.
The
total variation distance $V$ between the laws $\pp^{\sigma_{0},n}_T$
and $\pp^{\sigma_{1},n}_T$ satisfies (see \cite{kabanov1986variation})
\begin{align*}
V^{2}(\pp^{\sigma_{0},n}_T,\pp^{\sigma_{1},n}_T) & \leq 16\,\mathbb{E}_{\pp^{\sigma_{0},n}_T}\left[\int_{0}^{T}\int_{0}^{\infty}(\sqrt{Y(t,x)}-1)^{2}v_{n}(x)\,\dd x\,\dd t\right],\\
Y(t,x) & =\frac{1}{\sigma_{1}(X_{t-})}v_{n}\Bigl(\frac{x}{\sigma_{1}(X_{t-})}\Bigr)/v_{n}(x).
\end{align*}
Using the inequality $1-e^{-x}\leq x$ holding for all $x>0,$ we get
\begin{align}
V^2(\pp^{\sigma_{0},n}_T,\pp^{\sigma_{1},n}_T) & \leq16\, \mathbb{E}_{\pp^{\sigma_{0},n}_T}\left[\int_{0}^{T}\int_{0}^{\infty}\left(\frac{xn\beta(1-\sigma_{1}^{-1}(X_{t-}))}{2}\right)^{2}v_{n}(x)\,\dd x\,\dd t\right]\nonumber\\
  & \lesssim\mathbb{E}_{\pp^{\sigma_{0},n}_T}\left[\int_{0}^{T}\int_{0}^{\infty}n^{2}h^{2\lambda}\psi^{2}(X_{t-}/h)x^{2}v_{n}(x)\,\dd x\,\dd t\right]\nonumber\\
 & \lesssim n^{2}h^{2\lambda}\mathbb{E}_{\pp^{\sigma_{0},n}_T}\left[\int_{0}^{T}\psi^{2}(X_{t-}/h)\,\dd t\right]\int_{0}^{\infty}x^{2}v_{n}(x)\,\dd x\nonumber\\
 & \lesssim nh^{2\lambda}\mathbb{E}_{\pp^{\sigma_{0},n}_T}\left[\int_{0}^{T}\psi^{2}(X_{t-}/h)\,\dd t\right], \label{eq:vnh}
\end{align}
where here and in the sequel $\lesssim$ for inequality up to a
constant depending on \(\alpha\) and \(\beta\).
Furthermore
\begin{align*}
\mathbb{E}_{\pp^{\sigma_{0},n}_T}\left[\int_{0}^{T}\psi^{2}(X_{t-}/h)\,\dd t\right] & =\int_{0}^{T}\int_{0}^{\infty}\left[\psi^{2}(z/h)p_{L_{t}^{n}}(z)\right]\,\dd z\,\dd t,
\end{align*}
where \(p_{L_{t}^{n}}\) is the density of \(L_{t}^{n}.\)
Using  well-known results for the Gamma function, see e.g.\ the sharp version of the Stirling formula of Theorem~1.6 in \cite{batir2008inequalities}, we have for $t\geq2/(n\alpha)$
\begin{align*}
p_{L_{t}^{n}}(z) & =\frac{(n\beta)^{n\alpha t}}{\Gamma(n\alpha t)}z^{n\alpha t-1}e^{-n\beta z}
 \lesssim\frac{1}{z}\text{\ensuremath{\sqrt{\frac{n\alpha t}{2\pi}}}}\exp\left\{ n\alpha t\left(\log\left(\frac{\beta z}{\alpha t}\right)+1-\frac{\beta z}{\alpha t}\right)\right\}.
\end{align*}
 Fix some $0<\delta<1/2$ and consider the integral 
\begin{align*}
\int_{0}^{\infty}\left[\psi^{2}(z/h)p_{L_{t}^{n}}(z)\right]\,\dd z & \lesssim\int_{0}^{\infty}\frac{1}{y}\text{\ensuremath{\sqrt{\frac{n\alpha t}{2\pi}}}}\psi^{2}(\alpha ty/\beta h)\,e^{n\alpha t(1-y+\log(y))}\,\dd y\\
 & =\int_{|y-1|\leq\delta}\frac{1}{y}\text{\ensuremath{\sqrt{\frac{n\alpha t}{2\pi}}}}\psi^{2}(\alpha ty/\beta h)\,e^{n\alpha t(1-y+\log(y))}\,\dd y\\
 & \quad+\int_{y>1+\delta}\frac{1}{y}\text{\ensuremath{\sqrt{\frac{n\alpha t}{2\pi}}}}\psi^{2}(\alpha ty/\beta h)\,e^{n\alpha t(1-y+\log(y))}\,\dd y\\
 & \quad+\int_{y<1-\delta}\frac{1}{y}\text{\ensuremath{\sqrt{\frac{n\alpha t}{2\pi}}}}\psi^{2}(\alpha ty/\beta h)\,e^{n\alpha t(1-y+\log(y))}\,\dd y\\
 & =:I_{1}+I_{2}+I_{3}.
\end{align*}
Since \(1-y+\log(y)\leq -c(1-y)^2\) for some \(c=c(\delta)>0\) if \(|1-y|\leq\delta<1/2,\) we have with \(z=\sqrt{n\alpha t}(y-1),\)
\begin{align*}
I_{1} &  \lesssim\int_{-\delta}^{\delta}\psi^{2}(\alpha t(1+z/\sqrt{n\alpha t})/\beta h)e^{-cz^{2}}\,\dd z, \\
I_{2} & \lesssim\sqrt{n\alpha t}\,e^{n\alpha t(-\delta+\log(1+\delta))}, \\ 
I_{3} & \lesssim\sqrt{n\alpha t}\,e^{n\alpha t(\delta+\log(1-\delta))}.
\end{align*}
Hence for any $t_{0}\geq2/(n\alpha)$ we derive, using the above estimates of $I_1,I_2,I_3$, 
\begin{align}
\int_{0}^{T}\int_{0}^{\infty}\left[\psi^{2}(z/h)p_{L_{t}^{n}}(z)\right]\,\dd z\,\dd t  
& =\int_{0}^{t_0}\int_{0}^{\infty}\left[\psi^{2}(z/h)p_{L_{t}^{n}}(z)\right]\,\dd z\,\dd t \nonumber\\
& \qquad +
 \int_{t_0}^{T}\int_{0}^{\infty}\left[\psi^{2}(z/h)p_{L_{t}^{n}}(z)\right]\,\dd z\,\dd t
\nonumber\\
&\lesssim t_{0}+h\int_{-\delta}^{\delta}\int_{0}^{T/h}\psi^{2}\left(\frac{\alpha s}{\beta}+\frac{z}{\beta}\sqrt{\frac{\alpha s}{nh}}\right)\,\dd s\,\dd z\nonumber\\
 & \qquad +\sqrt{n\alpha t_{0}}\left(e^{n\alpha t_{0}(-\delta+\log(1+\delta))}+e^{n\alpha t_{0}(\delta+\log(1-\delta))}\right),\label{eq:t0h}
\end{align}
where for the last term it is used that the function \(x\mapsto\sqrt{x}e^{-xa}\) with \(a>0\) is monotone decreasing  for \(x\ge 1/(2a)\). 
The double integral in \eqref{eq:t0h} is bounded in $n,h$ as follows from  
\begin{eqnarray*}
\int_{0}^{\infty}\psi^{2}\left(\frac{\alpha s}{\beta}+\frac{z}{\beta}\sqrt{\frac{\alpha s}{nh}}\right)\,\dd s\leq 1/\alpha+\int_{1/\alpha}^{\infty}\psi^{2}\left(\frac{\alpha s}{\beta}(1-\delta)\right)\,\dd s, 
\end{eqnarray*}
where we used that $\phi$ is bounded by 1, increasing and that in this integral $|z|\leq \delta$. Choosing $t_{0}= o(h)$ and such that $nt_0\to\infty$,
one sees that the term with $h$ in \eqref{eq:t0h} is the dominating term. Recalling \eqref{eq:vnh}, we subsequently take 
$h=c n^{-1/(1+2\lambda)}$ for a small enough constant \(c>0,\) to derive for some $0<v<1$
\[
V^2(\pp^{\sigma_{0},n}_T,\pp^{\sigma_{1},n}_T)\leq v^2
\]
for all \(n\) large enough. With this choice of $h$ we take $s_n=\half c^\lambda n^{-\lambda/(1+2\lambda)}$. Then \eqref{eq:2sn} is satisfied, and from \eqref{eq:tsyb}  we arrive at \eqref{eq:lower} with $c_0=\half c^\lambda$ and $c_1=\half(1-v)$, both strictly positive.
\end{proof}

\section{Real data example}
\label{sec:real}

The North Greenland Ice Core Project (NGRIP) obtained from drilling through arctic ice an oxygen isotope record reaching 120\,000 years into the past beyond the last glacial (\cite{northgreenlandicecoreprojectmembers2007ymoo}.)
Figure~\ref{fig:dataicecore} shows measurements of the indicator  $\delta_{18}\mathrm{O}$ derived from oxygen isotope measurements at times $t = 0, 50, \dots$ in $\Delta t = 50$ year intervals, with in total $n =2459$ observations. In geological scales such a  $\Delta t$ can be considered small.
The oxygen isotope record is a proxy for past temperature, and the data shows characteristic sudden changes in global temperature, a topic of urgent relevance. It has been suggested  to model the NGRIP data as a realisation of a stochastic differential equation with solution $Y$, also as a L\'evy-driven SDE to account for the heavier tailed noise, see \cite{ditlevsen1999observation}. An in-depth study is given in \cite{Carson2019}.

Here we are interested in estimating the volatility $\varsigma$ of the process $Y_t$, observed on the equidistant time grid $0, \Delta t, \ldots, n\Delta t$ with $n$ observations. Estimating the volatility is an important step in data assimilation and inference tasks related to rapid temperature transitions, for example during so called Dansgaard-Oeschger events.

For a particular realisation $y$ of $Y$ and a fixed $\Delta t$, the realised quadratic variation process over a time grid with step size $\Delta t$ can be defined as
$q_{t+\Delta t} - q_t = \Delta q_t$ with $\Delta q_t := (\Delta y_t)^2= (y_{t + \Delta t} -y_{t})^2$, $q_0 = 0$. 
See Figure~\ref{fig:obsicecore} for a visualisation and note the resemblance  (up to scaling) of this figure with Figure~\ref{fig:ex1obs} in Example~\ref{exam:1}.
We remark that the realised quadratic variation process of a diffusion process can be considered as a measure of intrinsic progress of time, also referred to as internal clock. 

For motivation of the model proposed below, suppose that $Y$ is a diffusion process satisfying $\dd Y_t= 
\varsigma_t\dd W_t$, with $W$ a Brownian motion. For small $\Delta t$ one has $(\Delta Y_t)^2\approx \varsigma_t^2 (W_{t+\Delta t}-W_t)^2$, which has (given the past up to time $t$) a  $\operatorname{Gamma}(\half, \frac{1}{2\Delta t \varsigma_t^2})$ distribution. 

Consider next the L\'{e}vy-driven SDE
\begin{equation}\label{eq:realizedvola1}
\dd X_t = c \varsigma^2_t \, \dd L_t,
\end{equation}
 where $L$ is a gamma process with parameters $\alpha$, $\beta$ to be specified shortly. For $X$ solving  \eqref{eq:realizedvola1} we have that $\Delta X_t\approx c\varsigma^2_t\Delta L_t$, which is, conditional on the past up to time $t$, a $\operatorname{Gamma}(\alpha\Delta t, \beta/(c \varsigma_t^2))$ random variable. 
Then with $\alpha = \frac{1}{2\Delta t}$ and $\beta = \frac{c}{2\Delta t }$, the conditional distributions of $\Delta X_t$ and $(\Delta Y_t)^2$ are approximately gamma with the same parameters, for any choice of $c>0$. We used $c =  n\Delta t /q_{n \Delta t}$, which implies $\ee L_{n\Delta t} =  
q_{n\Delta t}$.
With this in mind,  we model $q_{i \Delta t}$ as observation of a realisation of the continuous time process 
\begin{equation}\label{eq:realizedvola}
\dd X_t = \sigma(X_{t-}) \dd L_t.
\end{equation}
with unknown $\sigma$. As both $\alpha$ and $\beta$ are proportional to $1/\Delta t \asymp n$, this corresponds to the regime detailed after \eqref{eq:sdey} with parameters of the driving gamma process proportional to $n$, which suggests that our asymptotic results are practically relevant for this problem. Here, we choose to model the slope $\sigma$ of the curve of realized quadratic variation, see in Figure~\ref{fig:obsicecore}, as a
function of $Y_t = \int_0^t  c \varsigma^2_s \, \dd L_s$, using the monotonicity of the realisation. 

In estimating $\sigma$ with \eqref{eq:realizedvola}, using terminology common in the financial literature, we estimate volatility of $\delta_{18}\rm{O}$ as function of the time measured by the intrinsic (or business) clock. To perform the statistical analysis with our approach we
 set $K = 20$, which appears to be a good compromise in terms of bias-variance trade-off. 
We take equidistant bins over the range of observations of $q_t$. Note that a fixed resolution in space implies a variable resolution in time, with larger bins at times where the increments of $X$ are small.
Then the posterior is determined by choosing the prior parameters; here we took weakly informative parameters $\alpha_k = \beta_k \equiv 0.1$. Figure~\ref{fig:posticecore} gives the marginal posterior band for $\sigma$. The figure shows  that in general with higher intrinsic age $X_t$, the volatility of the measurements decreases. This phenomenon has been noted before, and a relation to aging processes in the ice has been suggested, highlighting two periods of unusual activity.

\begin{figure}
\begin{center}
\includegraphics[width=0.7\linewidth]{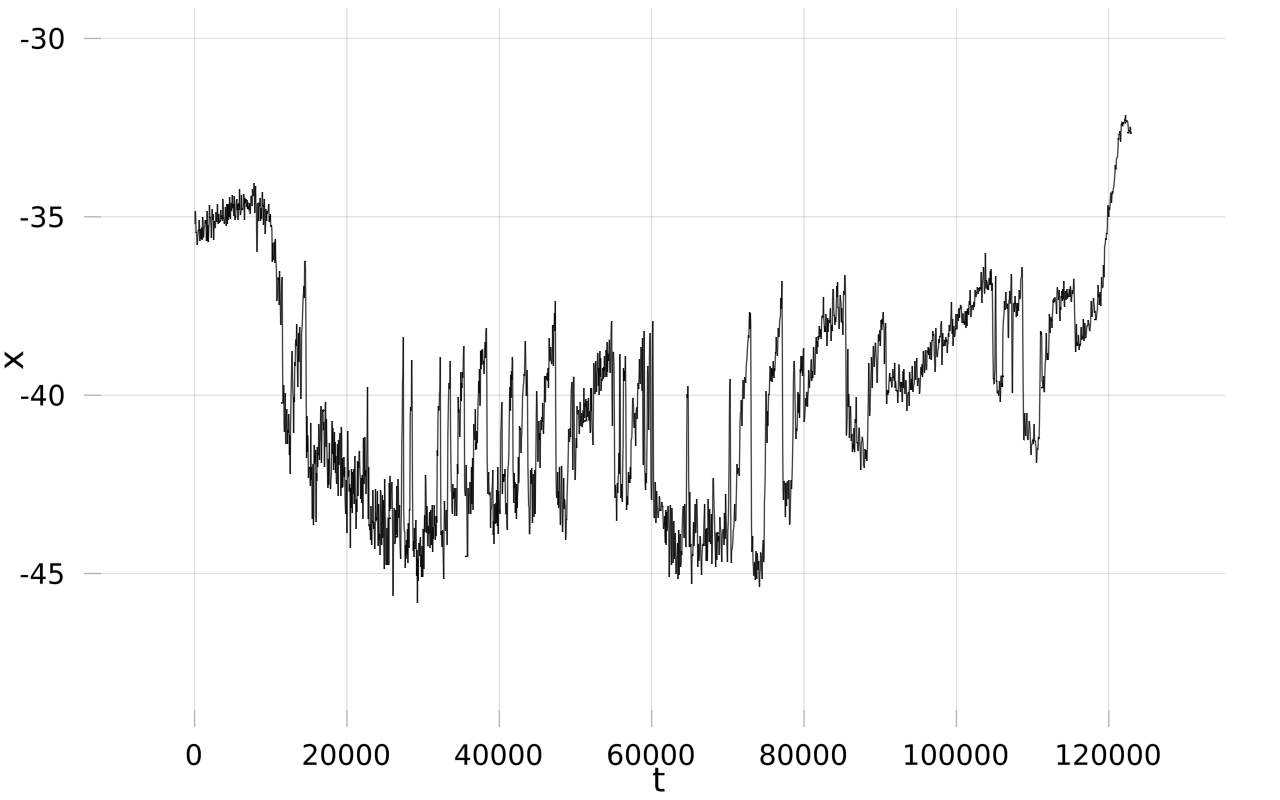}
\end{center}
\caption{NGRIP oxygen isotope $\delta_{18}\rm{O}$ measurements against time (years BP 2000). }
\label{fig:dataicecore}
\end{figure}

\begin{figure}
\begin{center}
\includegraphics[width=0.6\linewidth]{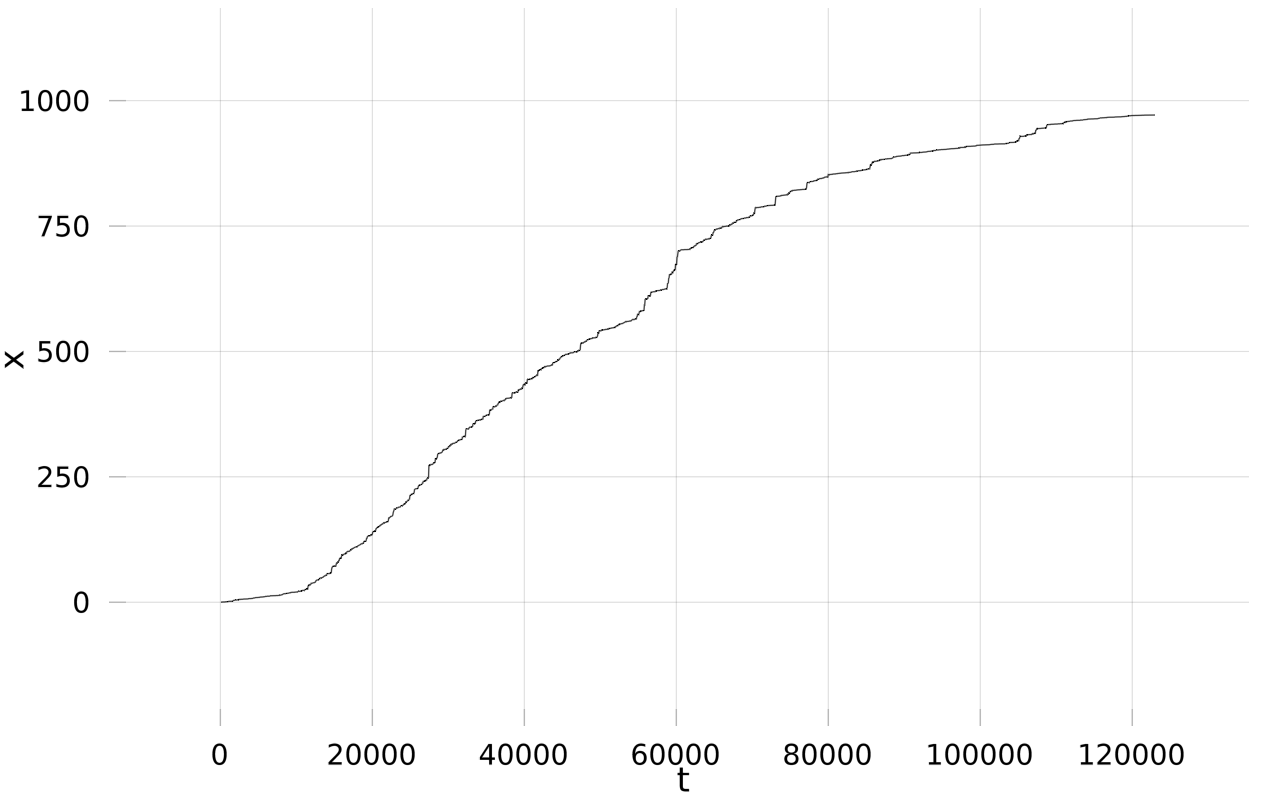}
\end{center}
\caption{Realized quadratic variation process for NGRIP oxygen isotope $\delta_{18}\rm{O}$ measurements against time (years BP 2000). }
\label{fig:obsicecore}
\end{figure}

\begin{figure}
\begin{center}
\includegraphics[width=0.6\linewidth]{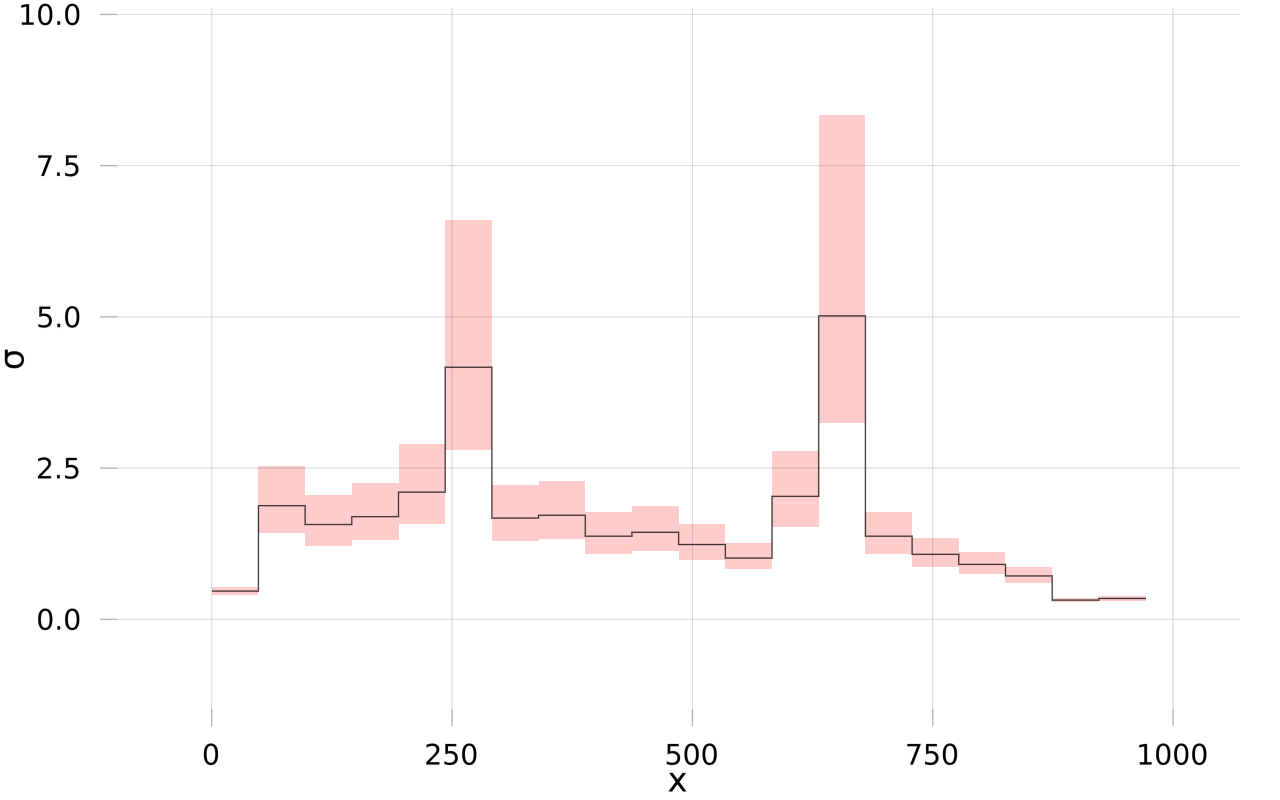}
\end{center}
\caption{Shaded: marginal $90\,\%$-posterior credible band for the piecewise constant posterior for the scale of the quadratic variation process of the ice core time series. Black: marginal posterior median.}
\label{fig:posticecore}
\end{figure}

\section{Brief discussion of some extensions}\label{sec:extensions}

In the treatment of our statistical problem  the standing assumption so far was that observations were rooted in a model derived from \eqref{eq:sde}, and that the process has been observed continuously in time. In an asymptotic setting we have derived contraction rates for the Bayesian estimator in Sections~\ref{section:pw} and \ref{section:holder}. There are (at least) two variations that immediately come to mind, an extended model that includes a drift component and a different observation regime, i.e.\ discrete time observations. We address these in the next two short sections and discuss how to adapt our previous statistical procedure to these situations.

\subsection{Inclusion of drift}

In principle, one could also consider a modification of the original SDE model by adding a drift, 
\begin{equation}\label{eq:drift}
X_t= \int_0^t a(X_s)\,\dd s+\int_0^t\sigma(X_{s-})\,\dd L_s.
\end{equation}
The property that $X$ satisfying \eqref{eq:drift} has increasing paths is guaranteed by imposing that $a$ is nonnegative.
Inclusion of the drift has the consequence that the laws of $X$ on $[0,T]$ under presence and absence of a drift, but keeping the $\int_0^t\sigma(X_{s-})\,\dd L_s$ part the same for both cases, are automatically mutually singular. This happens because an absolutely continuous change of measure in this setting only changes the third characteristic of a semimartingale, which excludes a change of drift. This implies that the drift part, $\int_0^t a(X_s)\,\dd s$, can be identified with probability one. 

Indeed, if  $X$ is observed in continuous time, one also observes the process $\sum_{s\leq \cdot}\Delta X_s$, and therefore also the difference $X- \sum_{s\leq \cdot}\Delta X_s$, which is $\int_0^{\bcdot} a(X_s)\,\dd s$. For the statistical problem of estimating $\sigma$, one can apply a modification of our procedure, by incorporating the values $a(X_s)$. For instance, if next to $\sigma$ also $a$ is piecewise constant on bins, with known (or estimated) values $a_k$, the assertion of Proposition~\ref{prop:limittau} has to be modified by taking $\Delta\bar\tau^n_k$ as $\frac{n\Delta b_k\beta}{\alpha \sigma_k+\beta a_k}$. Hence we derive analogously to Theorem~\ref{thm:contr-lip} that  for any sequence \(m_n\to \infty,\)
\[
\Pi_n\Bigl(|\xi_k-\sigma_k|>\frac{m_n}{\sqrt{n}}\Bigr)\to 0 \mbox{ in probability},
\]
for \(k=1,\ldots,K,\) where  we model  the $(\sigma_k)$ as independent random variables $(\xi_k)$ and \(\Pi_n\) stands for the posterior distribution of \((\xi_k).\)

\subsection{Discrete time observations}\label{section:discrete}

In previous sections we have assumed that $X^n$ given by~\eqref{eq:sden} is observed continuously on a long time interval $[0,T^n]$.  In this section the index $n$ plays no role  (in the situation we will describe shortly,  we  even take $n=1$  without loss of generality), it is fixed and therefore omitted below from our notation. 
The current setting is the practically relevant situation where $X$ is observed only discretely in time at timepoints $t_0, \dots, t_m$.  We use the abbreviated notation $X^d = (X_{t_i})_{i = 1}^m$. It is assumed that realisations $X_{t_i} = x_{t_i}$, $i = 0, \dots, m$ have been observed. In this discrete time setting, the level hitting times $\tau_k$,
 are only known approximately: if $b_k \in [X_{t_i},X_{t_{i+1}})$, then  $\tau_{k} \in [t_{i}, t_{i+1})$. Our previous inference procedure for piecewise constant $\sigma$, assuming  observation of the $\tau_k$,  can then still be used in a data augmentation type Gibbs sampler, targeting the joint posterior distribution of $X$ and $\sigma$ given the observations. The sampler alternates an interpolation step, where the random times $\tau_k$ are sampled conditional on $\sigma$ and the observations $X_{t_i}$, and an inference step where $\sigma$ is sampled conditional on the 
$\tau_k$  as described before to obtain the posterior distribution. In this section we propose a procedure how to sample the $\tau_k$ conditional on $\sigma$ and observations $X_{t_i}$ to complement the continuous time inference procedure developed before, e.g.\ as in Section~\ref{section:pw}. For that extension, it is enough to assume that $\sigma$ is a piecewise constant function, fixed and known. So we only consider the problem of sampling the posterior conditional on the observation of the process $X^{n}_{t_i}$ for piecewise constant $\sigma$ given by \eqref{eq:sde^n}. By the Markov property, the conditional distribution of $X$ given observations $X^d$ factorises into independent distributions of the path of $X$ on time intervals $[t_{i}, t_{i+1})$. As the conditional distribution of $\sigma$ only depends on the $\tau_k$, it is enough to sample the trajectory of $X$ on intervals $[t_{i}, t_{i+1})$ that are such that  $b_k \in [X_{t_i},X_{t_{i+1}})$ for some $k$. We therefore concentrate on the task of sampling $X$ (for this purpose we can take $n=1$ without loss of generality) conditional on $X_{S} = u$, $X_{T} = v$, $S < T$ with $b = b_k \in [u, v)$.

Simplifying, we assume that each bin contains at least one observation.
Observe that if $\sigma = 1$ then $X_{t}$,  conditional on $X_{t_i} = x_{t_i}$, $X_{t_{i+1}} = x_{t_{i+1}}$ for $t_i<t<t_{i+1}$, is just a gamma process bridge with known distribution. By Corollary~\ref{cor:lik} the law of the conditional process is absolutely continuous with respect the law of a conditional gamma process,
\begin{eqnarray*}
\frac{\dd\pp^\sigma_T}{\dd\pp^1_T}=\exp\left\{ \beta\sum_{k=1}^{K}\left[1-n\xi_{k}^{-1}\right](X_{\tau_{k}}- X_{\tau_{k-1}})-\alpha\sum_{k=1}^{K}\left(\tau_{k}-\tau_{k-1}\right)\log(\xi_{k}/n)\right\}. 
\end{eqnarray*}
We denote the law of  $\tau=( \tau_0,\ldots,\tau_K)$ conditional on $X^d = x^d$
by  $Q^\sigma_\tau$, emphasising the dependence on $\sigma$.
By the abstract Bayes formula, it holds that, with the approximation below obtained by replacing $X_{\tau_{k}}$ with $b_k$,
\[
\frac{\dd Q^\sigma_\tau}{\dd Q^1_\tau}(\tau_0, \dots, \tau_K) \approx \prod_{i = 1}^m \frac{p^\sigma(t_{i-1}, x_{t_{i-1}}; t_{i}, x_{t_{i}})}{p^1(t_{i-1}, x_{t_{i-1}}; t_{i}, x_{t_{i}})} R(\tau_0, \dots, \tau_K)
\]
where $p^\sigma(t_{i-1}, x_{t_{i-1}}; t_{i}, x_{t_{i}})$ denotes the transition density of the process $X$ with volatility $\sigma$ given $X_{t_{i-1}}=x_{t_{i-1}}$ to the value $X_{t_{i}}= x_{t_{i}}$,
and
\[
R(\tau_0, \dots, \tau_K) = \exp\left\{ \beta\sum_{k=1}^{K}\left[1-n\xi_{k}^{-1}\right](b_k - b_{k-1})-\alpha\sum_{k=1}^{K}\left(\tau_{k}-\tau_{k-1}\right)\log(\xi_{k}/n)\right\}. 
\]
A random sample of  $\tau$ under $Q^1_\tau$ can be obtained, observing that the unnormalised density of $\tau_k$ given $X_{t_{i-1}} = x_{t_{i-1}}$ and $X_{t_i} = x_{t_i}$, $b_k \in  [x_{t_i},x_{t_{i+1}})$ is \emph{approximately} given by ($\tau_k$ is now also used for a realisation of $\tau_k$)
\[
f(\tau_k) = p^1(t_{i-1}, x_{t_{i-1}}; \tau_k, b_k) p^1( \tau_k, b_k; t_i, x_{t_{i}}) 
\]
with the approximation obtained by again replacing $X_{\tau_{k}}$ by $b_k$. So the posterior distribution of $\xi_k$ given $X^d = x^d$ can be computed using a Gibbs type Metropolis-Hastings approach where in one step every $\tau_k$ is sampled conditional on $\xi_k$ given $X^d = x^d$ using  $Q^1_\tau$ as proposal distribution with $R$ as (approximate) likelihood, 
and then in a next step, $\xi_k$ are sampled given $\tau_k$ using~\eqref{eq:igposterior}.

\subsection{More general driving processes}
In this paper, we focus on SDEs with driving gamma processes. In principle, one can consider other driving L\'evy processes, but then we will lose some nice features of the current model. The situation here is analogous to the diffusion case with  Brownian noise. The Brownian noise enables one to write down a likelihood ratio `a la Girsanov. If one replaces the Brownian motion with an arbitrary continuous local martingale, the likelihood ratio will no longer be explicit. If one changes the gamma process to another increasing process in our setting, a similar phenomenon will arise. The exact expression of the likelihood ratio in Corollary 9 will change, and one has to use another prior instead of the inverse gamma one. Conceptually one can follow a similar strategy as we have proposed now, but a concrete realization will be different, and it is doubtful whether the explicit expressions for the likelihood and  the posteriori distribution exist in this case. 

\subsection*{Acknowledgments}
The research leading to these results has received funding from the European Research Council under ERC Grant Agreement 320637.
The research of the first author was supported by  the HSE University Basic Research Program and the German Science Foundation research grant (DFG Sachbeihilfe) 406700014.

\bibliographystyle{apa-good}
\bibliography{bibliography}

\end{document}